\newtheorem{theorem}{Theorem}[section]
\newtheorem{prop}[theorem]{Proposition}
\newtheorem{lemma}[theorem]{Lemma}
\newtheorem{remark}[theorem]{Remark}
\newcommand{\average}{-\!\!\!\!\!\!\int}
\begin{document}

\title
{\bf Convergence Rates \\ in Homogenization of Stokes Systems}

\author{Shu Gu\footnote{Supported in part by NSF grant DMS-1161154}}

\date{ }

\maketitle

\begin{abstract}
This paper studies the convergence rates in $L^2$ and $H^1$ of Dirichelt problems for Stokes systems with rapidly oscillating periodic coefficients, without any regularity assumptions on the coefficients.


\end{abstract}

{\bf Keywords:} Convergence rates; Stokes systems; Homogenization.

\medskip


\section{Introduction and Main Results}
\setcounter{equation}{0}

\quad\  The purpose of this paper is to study the convergence rates of Dirichlet problems for Stokes systems with rapidly oscillating periodic coefficients. More precisely,  we consider the following Dirichlet problem for Stokes systems associated with matrix $A$,
\begin{equation}\label{DirichletStokes}
\left\{
\begin{aligned}
\mathcal{L}_\varepsilon(u_\varepsilon)+\nabla p_\varepsilon &= F &\qquad &\text{ in }\Omega,\\
\text{ div }u_\varepsilon &=g &\qquad &\text{ in }\Omega,\\
u_\varepsilon& =f &\qquad &\text{ on }\partial\Omega,
\end{aligned}
\right.	
\end{equation}
with the compatibility condition
\begin{equation}\label{Compatibility}
\int_\Omega g -\int_{\partial \Omega}f\cdot n=0,
\end{equation}
where $n$ denotes the outward unit normal to $\partial \Omega$ and
 $\Omega\subset \mathbb{R}^d$ is a bounded domain.
 We note that the Dirichlet problem (\ref{DirichletStokes}) is used in the modeling of flows in porous media.
 Here $\varepsilon >0$ is a small parameter and the operator $\mathcal{L}_\varepsilon$ is defined by
\begin{equation}\label{Operator}
\mathcal{L}_\varepsilon= -\text{div}(A(x/\varepsilon)\nabla)=-\frac{\partial}{\partial x_i}\bigg[a_{ij}^{\alpha\beta}\big(\frac{x}{\varepsilon}\big)\frac{\partial}{\partial x_j}\bigg]
\end{equation} with $1\le i,j,\alpha,\beta \le d$ (the summation convention is used throughout). We will assume that the coefficient matrix $A(y)=(a_{ij}^{\alpha\beta}(y))$ is real, bounded measurable, and satisfies the ellipticity condition:
\begin{equation}\label{Ellipticity}
\mu |\xi|^2 \le a_{ij}^{\alpha\beta}(y)\xi_i^\alpha\xi_j^\beta \le \frac{1}{\mu} |\xi|^2 \qquad \text{for }y\in \mathbb{R}^d \text{ and }\xi=(\xi_i^\alpha) \in \mathbb{R}^{d\times d}, 	
\end{equation} 
where $\mu>0$. We also assume that $A(y)$ satisfies the periodicity condition,
\begin{equation}\label{Periodicity}
A(y+z)=A(y) \qquad \text{ for }y\in \mathbb{R}^d\text{ and }z\in \mathbb{Z}^d.
\end{equation}
No symmetry condition on $A(y)$ is needed. A function satisfying (\ref{Periodicity}) will be called 1-periodic.   

By the homogenization theory of Stokes systems (see \cite{Lions,GuShen15}), 
under suitable conditions on $F$, $f$ and $g$,
it is known that 
$$
u_\varepsilon \rightharpoonup u_0 \quad \text{  weakly in  }
H^1(\Omega;\mathbb{R}^d) \quad \text{ and  }\quad p_\varepsilon 
-\average_\Omega p_\varepsilon\rightharpoonup p_0-\average_\Omega p_0 \quad \text{ weakly in  }L^2(\Omega),
$$
where $(u_0, p_0)\in H^1(\Omega;\mathbb{R}^d)\times L^2(\Omega)$ is the weak solution of the  homogenized problem with constant coefficients,
\begin{equation}\label{DirichletStokes0}
\left\{
\begin{aligned}
\mathcal{L}_0(u_0)+\nabla p_0 &= F &\qquad &\text{ in }\Omega,\\
\text{ div }u_0 &=g &\qquad &\text{ in }\Omega,\\
u_0 & =f &\qquad &\text{ on }\partial\Omega.
\end{aligned}
\right.	
\end{equation}
The primary purpose of this paper is to investigate the rate of convergence of $\|u_\varepsilon-u_0\|_{L^2(\Omega)}$, as $\varepsilon \rightarrow 0$. The following is the main result of the paper. 
 
\begin{theorem}\label{theorem1.2}
Let $\Omega$ be a bounded $C^{1,1}$ domain. Suppose that $A$ satisfies the ellipticity condition (\ref{Ellipticity})
and periodicity condition (\ref{Periodicity}). Given $g\in H^1(\Omega)$ and $f\in H^{3/2}(\partial\Omega;\mathbb{R}^d)$ satisfying the compatibility condition(\ref{Compatibility}), for $F\in L^2(\Omega;\mathbb{R}^d)$, 
let $(u_\varepsilon,p_\varepsilon)$, $(u_0,p_0)$ be weak solutions of Dirichlet problems (\ref{DirichletStokes}), (\ref{DirichletStokes0}), respectively.  Then 
\begin{equation}\label{ethm1.2}
\|u_\varepsilon-u_0\|_{L^2(\Omega)} \le C\varepsilon\|u_0\|_{H^2(\Omega)},
\end{equation}
where the constant $C$ depends only on $d$, $\mu$, and $\Omega$.
\end{theorem}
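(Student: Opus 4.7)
The plan is a two-scale expansion combined with an Aubin--Nitsche duality, following the Suslina / Kenig--Lin--Shen strategy adapted to Stokes. First I would introduce the $1$-periodic velocity correctors $\chi=(\chi_j^{\gamma\beta})$ and pressure correctors $\pi=(\pi_j^\beta)$ of the Stokes cell problem on the unit torus, together with the associated flux corrector $\phi=(\phi_{kij}^{\alpha\beta})$ antisymmetric in $(k,i)$, through which the oscillatory object $\widehat A-A(y)-A(y)\nabla_y\chi(y)$ (combined with the pressure-corrector contribution) is realized as the $y$-divergence of a bounded antisymmetric tensor. Let $S_\varepsilon$ denote a Steklov-type smoothing at scale $\varepsilon$, and $\eta_\varepsilon\in C_c^\infty(\Omega)$ a cutoff equal to $1$ outside the $O(\varepsilon)$-neighborhood $\Omega_\varepsilon=\{x\in\Omega:\mathrm{dist}(x,\partial\Omega)<\varepsilon\}$. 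I then form the two-scale remainder
\[
w_\varepsilon\ :=\ u_\varepsilon-u_0-\varepsilon\,\chi(x/\varepsilon)\,S_\varepsilon\bigl(\eta_\varepsilon\nabla u_0\bigr),
\]
adjusted by a Bogovskii-type term of $H^1$ norm $O(\varepsilon\|u_0\|_{H^2(\Omega)})$ so that $w_\varepsilon$ is divergence-free and vanishes on $\partial\Omega$.

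Second, plugging $w_\varepsilon$ into $\mathcal L_\varepsilon$ and using the cell-problem equations together with the antisymmetry of $\phi$, I would rewrite $\mathcal L_\varepsilon w_\varepsilon+\nabla Q_\varepsilon$ as a divergence $\mathrm{div}\,\Phi_\varepsilon$ of an explicit $L^2$-tensor. Combining the periodic $L^2$ bounds on $\chi(x/\varepsilon)$, $\pi(x/\varepsilon)$ and $\phi(x/\varepsilon)$, the smoothing estimate $\|(I-S_\varepsilon)g\|_{L^2}\le C\varepsilon\|\nabla g\|_{L^2}$, and the boundary-strip bound $\|\nabla u_0\|_{L^2(\Omega_\varepsilon)}\le C\sqrt\varepsilon\,\|u_0\|_{H^2(\Omega)}$ (valid because $\Omega$ is $C^{1,1}$), one gets $\|\Phi_\varepsilon\|_{L^2}\le C\sqrt\varepsilon\,\|u_0\|_{H^2(\Omega)}$. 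The energy estimate for the Dirichlet Stokes system then yields the intermediate rate
\[
\|w_\varepsilon\|_{H^1(\Omega)}\ \le\ C\sqrt\varepsilon\,\|u_0\|_{H^2(\Omega)}.
\]

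Third, to upgrade from $\sqrt\varepsilon$ in $H^1$ to $\varepsilon$ in $L^2$, I would apply duality. Given $h\in L^2(\Omega;\mathbb R^d)$, let $(\psi_\varepsilon,q_\varepsilon)$ solve the adjoint Dirichlet Stokes problem $\mathcal L_\varepsilon^*\psi_\varepsilon+\nabla q_\varepsilon=h$, $\mathrm{div}\,\psi_\varepsilon=0$, $\psi_\varepsilon|_{\partial\Omega}=0$, and let $(\psi_0,q_0)$ be its homogenized counterpart, with $\|\psi_0\|_{H^2(\Omega)}\le C\|h\|_{L^2(\Omega)}$ by the $C^{1,1}$ regularity of $\partial\Omega$. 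Testing the equation for $u_\varepsilon-u_0$ against $\psi_\varepsilon$ and cancelling the pressures against the divergence-free constraints reduces $\int_\Omega(u_\varepsilon-u_0)\cdot h\,dx$ to $-\int_\Omega\bigl(A(x/\varepsilon)-\widehat A\bigr)\nabla u_0:\nabla\psi_\varepsilon\,dx$. Expanding $\nabla\psi_\varepsilon$ via the adjoint two-scale ansatz and invoking the flux-corrector identity, the $O(1)$ oscillating part can be written as $\mathrm{div}(O(\varepsilon))$ and integrated by parts; the remaining pieces are $(I-S_\varepsilon)$-smoothing errors, boundary-strip errors, and the Bogovskii correction, each of size $O(\varepsilon)\,\|u_0\|_{H^2(\Omega)}\|h\|_{L^2(\Omega)}$. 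Taking the supremum over $\|h\|_{L^2}\le 1$ yields (\ref{ethm1.2}).

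The main obstacle is the boundary layer. The $H^1$ rate loses a factor $\sqrt\varepsilon$ in $\Omega_\varepsilon$, where the two-scale ansatz cannot match the Dirichlet data. The gain to $O(\varepsilon)$ in $L^2$ relies on testing this boundary-layer error against $\psi_\varepsilon$, which vanishes on $\partial\Omega$; a Hardy/trace inequality of the form $\|\psi_\varepsilon\|_{L^2(\Omega_\varepsilon)}\le C\sqrt\varepsilon\,\|\nabla\psi_\varepsilon\|_{L^2(\Omega)}$ recovers the missing $\sqrt\varepsilon$. Carrying this out without any regularity hypothesis on $A$—so that only periodic $L^2$ bounds on $\chi,\pi,\phi$ are available—while simultaneously controlling the pressure and the divergence constraint through a Bogovskii solve whose norm depends only on $\Omega$, is the technical heart of the argument.
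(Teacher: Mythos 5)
Your overall strategy (Steklov smoothing, flux/dual correctors, boundary cutoffs, duality with the adjoint Stokes problem) is the same Suslina-type scheme the paper follows, but the key duality step as you describe it has a genuine gap. After the (correct) reduction $\int_\Omega(u_\varepsilon-u_0)\cdot h\,dx=-\int_\Omega\big(A(x/\varepsilon)-\widehat A\big)\nabla u_0:\nabla\psi_\varepsilon\,dx$, you propose to substitute the adjoint two-scale ansatz for $\nabla\psi_\varepsilon$ and assert that all remaining pieces are $O(\varepsilon)$. The substitution error, however, is exactly the $H^1$ remainder of the adjoint expansion, which is only $O(\sqrt{\varepsilon})\|h\|_{L^2(\Omega)}$ (and this is sharp, because of the adjoint boundary layer); in your identity it is multiplied by $(A^\varepsilon-\widehat A)\nabla u_0$, which is merely $O(1)$ in $L^2$, so Cauchy--Schwarz gives only $O(\sqrt{\varepsilon})$, and since $u_\varepsilon-u_0$ by itself satisfies no equation with small right-hand side, there is no structure left with which to upgrade this term. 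The paper never runs the duality on $u_\varepsilon-u_0$ directly: it first gets the $O(\varepsilon)$ bound for $z_\varepsilon=u_\varepsilon-u_0-\varepsilon\chi^\varepsilon S_\varepsilon(\nabla\widetilde u_0)+w_\varepsilon$ (Lemma \ref{lemma3.2}) and then applies duality to the boundary corrector, i.e.\ to $\eta_\varepsilon=w_\varepsilon-\varepsilon\theta_\varepsilon\chi^\varepsilon S_\varepsilon(\nabla\widetilde u_0)$, whose Stokes data $\Psi$ and $\Gamma_1$ are supported in an $\varepsilon$-strip and are of size $O(\sqrt{\varepsilon})$; these pair with the adjoint expansion error, also $O(\sqrt{\varepsilon})$, whose corrector part is killed on the strip by the cutoff $1-\widetilde\theta_\varepsilon$ (Lemma \ref{lemma4.1} for $\mathcal L^*_\varepsilon$), so the two square roots multiply to $O(\varepsilon)$. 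Your Hardy-inequality remark does not repair this: the boundary-layer errors are in divergence form, so after integration by parts they meet $\nabla\psi_\varepsilon$ on the strip, which is not $O(\sqrt{\varepsilon})$ without invoking the cutoff adjoint expansion, and, specific to Stokes, also the $O(\sqrt{\varepsilon})$ expansion of the adjoint pressure as in (\ref{e4.3}) and (\ref{e4.9}) together with the choice of the constant $E$; your sketch never engages the adjoint pressure at all.

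A secondary error: the Bogovskii correction is not $O(\varepsilon)$ in $H^1$. Since $\mathrm{div}_y\chi=0$, the divergence of $\varepsilon\chi^\varepsilon S_\varepsilon(\eta_\varepsilon\nabla u_0)$ equals $\varepsilon\chi^\varepsilon\nabla S_\varepsilon(\eta_\varepsilon\nabla u_0)$, which contains $\varepsilon\chi^\varepsilon S_\varepsilon(\nabla\eta_\varepsilon\otimes\nabla u_0)$ with $|\nabla\eta_\varepsilon|\sim\varepsilon^{-1}$ on an $\varepsilon$-strip; by the boundary-strip estimate of Lemma \ref{lemma4.0} this is only $O(\sqrt{\varepsilon})\|u_0\|_{H^2(\Omega)}$ in $L^2$, hence the Bogovskii term is $O(\sqrt{\varepsilon})$ in $H^1_0$. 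That is harmless for your intermediate $H^1$ rate, but it cannot be counted as an $O(\varepsilon)$ contribution in the final duality without a further argument (the paper instead keeps the divergence mismatch and controls the resulting term $\int_\Omega\Gamma(\sigma_\varepsilon-\sigma_0)$ through the adjoint pressure expansion). Also, $S_\varepsilon$ enlarges supports by $\varepsilon$, so your cutoff must vanish on a $2\varepsilon$-neighborhood for the ansatz to vanish on $\partial\Omega$ --- a minor point compared with the duality gap above.
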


Theorem \ref{theorem1.2} gives the optimal $O(\varepsilon)$ convergence 
rate for the inverses of the Stokes operators in $L^2$ operator norm.
Indeed, let $T_\varepsilon: F\in L^2_\sigma (\Omega)
\to u_\varepsilon$, where $L^2_\sigma(\Omega)=
\big\{ F\in L^2(\Omega; \mathbb{R}^d):\, \text{div} (F)=0   \text{ in } \Omega \big\}$,
and $u_\varepsilon$ denotes the solution of (\ref{DirichletStokes}) with $F\in L^2_\sigma(\Omega; \mathbb{R}^d)$
and $g=0$, $f=0$. Then it follows from (\ref{ethm1.2}) and the estimate 
$\| u_0\|_{H^2(\Omega)} \le C \| F\|_{L^2(\Omega)}$ that
$$
\| T_\varepsilon -T_0 \|_{L^2_\sigma (\Omega) \to L^2_\sigma (\Omega)} \le C\varepsilon,
$$
where $T_0: F\in L^2_\sigma (\Omega)
\to u_0$.

In this paper we also obtain  $O(\sqrt{\varepsilon})$ rates for a two-scale expansion of $(u_\varepsilon, p_\varepsilon)$
in $H^1\times L^2$.
Let $(\chi,\pi)$ denote the correctors associated with $A$, defined by (\ref{Corrector}), and $S_\varepsilon$ the Steklov smoothing operater defined by (\ref{Steklov}).
 
\begin{theorem}\label{theorem1.1}
Let $\Omega$ be a bounded $C^{1,1}$ domain. Suppose that $A$ satisfies (\ref{Ellipticity})
and (\ref{Periodicity}).
Let $(u_\varepsilon,p_\varepsilon)$ and $(u_0,p_0)$ be the same as in Theorem \ref{theorem1.2}.  Then
 \begin{equation}\label{ethm1.1a}
\|u_\varepsilon-u_0-\varepsilon \chi^\varepsilon S_\varepsilon (\nabla\widetilde{u}_0)\|_{H^1(\Omega)} \le C\sqrt{\varepsilon}\|u_0\|_{H^2(\Omega)},
\end{equation}
where $\chi^\varepsilon (x)=\chi(x/\varepsilon)$ and
$\widetilde{u}_0$ is the extension of $u_0$ defined as in (\ref{defextension}). Moreover, if 
$\int_\Omega p_\varepsilon=\int_\Omega p_0=0$, then
\begin{equation}\label{ethm1.1b}            
\|p_\varepsilon-p_0-\Big\{ \pi^\varepsilon S_\varepsilon (\nabla\widetilde{u}_0)
-\average_\Omega \pi^\varepsilon S_\varepsilon (\nabla\widetilde{u}_0)\Big\}\|_{L^2(\Omega)}\le C\sqrt{\varepsilon}\|u_0\|_{H^2(\Omega)},
\end{equation}
where $\pi^\varepsilon (x)=\pi(x/\varepsilon)$.
The constants $C$ in (\ref{ethm1.1a}) and (\ref{ethm1.1b}) depend only on $d$, $\mu$, and $\Omega$.
\end{theorem}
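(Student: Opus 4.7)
The plan is to work directly with the two-scale error
$$ w_\varepsilon = u_\varepsilon - u_0 - \varepsilon \chi^\varepsilon S_\varepsilon(\nabla \widetilde{u}_0) $$
and the accompanying pressure remainder
$$ q_\varepsilon = p_\varepsilon - p_0 - \Bigl\{\pi^\varepsilon S_\varepsilon(\nabla\widetilde{u}_0) - \average_\Omega \pi^\varepsilon S_\varepsilon(\nabla\widetilde{u}_0)\Bigr\}. $$
A direct calculation, using the Stokes cell problem $\mathcal{L}_1(\chi_j^\beta + y_j e^\beta) + \nabla\pi_j^\beta = 0$ and the definition of $\widehat{A}$, rewrites $\mathcal{L}_\varepsilon w_\varepsilon + \nabla q_\varepsilon$ as a divergence, which I would split into a ``Steklov-difference'' piece $\operatorname{div}\bigl((\widehat{A}-A^\varepsilon)(\nabla\widetilde{u}_0 - S_\varepsilon\nabla\widetilde{u}_0)\bigr)$, controlled by $C\varepsilon\|u_0\|_{H^2}$ via the standard Steklov estimate, plus a ``flux'' piece $\operatorname{div}(\Psi^\varepsilon S_\varepsilon\nabla\widetilde{u}_0)$, where the periodic tensor $\Psi = \widehat{A} - A - A\nabla_y\chi - \pi\otimes I$ has zero mean on the period cell $Y$. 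The essential structural input is a periodic flux corrector $B\in L^2(Y)$, skew in the appropriate indices, satisfying $\Psi^{\alpha\beta}_{ij} = \partial_k B^{\alpha\beta}_{kij}$; the skew-symmetry converts $\operatorname{div}(\Psi^\varepsilon S_\varepsilon\nabla\widetilde{u}_0)$ into $\varepsilon\operatorname{div}(B^\varepsilon\nabla S_\varepsilon\nabla\widetilde{u}_0)$ and thereby absorbs the otherwise dangerous $\varepsilon^{-1}$ factor. On the divergence side, the identity $\operatorname{div}_y\chi^\beta_j = 0$ (Stokes correctors are divergence-free in the fast variable) cancels the $\varepsilon^{-1}$ term in $\operatorname{div}(\varepsilon\chi^\varepsilon S_\varepsilon\nabla\widetilde{u}_0)$, leaving $\|\operatorname{div}(w_\varepsilon)\|_{L^2(\Omega)}\le C\varepsilon\|u_0\|_{H^2}$.

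The main obstacle is that $w_\varepsilon$ is not zero on $\partial\Omega$; instead $w_\varepsilon|_{\partial\Omega} = -\varepsilon\chi^\varepsilon S_\varepsilon(\nabla\widetilde{u}_0)$, which is only $O(1)$ in $H^{1/2}(\partial\Omega)$, because differentiating $\chi^\varepsilon$ tangentially restores a full $\varepsilon^{-1}$. To homogenize the boundary data I would introduce a cut-off $\theta_\varepsilon\in C_c^\infty(\Omega)$ with $\theta_\varepsilon\equiv 1$ on $\{x : \operatorname{dist}(x,\partial\Omega)\ge 2\varepsilon\}$ and $|\nabla\theta_\varepsilon|\le C/\varepsilon$, and set
$$ v_\varepsilon = u_\varepsilon - u_0 - \varepsilon\theta_\varepsilon\chi^\varepsilon S_\varepsilon(\nabla\widetilde{u}_0), \qquad r_\varepsilon = w_\varepsilon - v_\varepsilon, $$
so that $v_\varepsilon$ vanishes on $\partial\Omega$ while $r_\varepsilon$ is supported in the $2\varepsilon$-strip $\Omega\setminus\Omega_{2\varepsilon}$. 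Using the boundary-layer inequality $\|\nabla u_0\|_{L^2(\Omega\setminus\Omega_{c\varepsilon})}\le C\sqrt{\varepsilon}\|u_0\|_{H^2}$, valid for $C^{1,1}$ domains by slicing together with the trace embedding $H^1\hookrightarrow L^2(\partial\Omega)$, and the elementary periodic-coefficient bound $\|\phi^\varepsilon S_\varepsilon f\|_{L^2(D)}\le C\|\phi\|_{L^2(Y)}\|f\|_{L^2(D')}$, one obtains $\|r_\varepsilon\|_{H^1(\Omega)}\le C\sqrt{\varepsilon}\|u_0\|_{H^2}$ by direct estimation.

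Finally, $(v_\varepsilon, q_\varepsilon)$ solves a Stokes system of the same form with zero boundary data, whose right-hand side and divergence are both controlled in $L^2$ by $C\sqrt{\varepsilon}\|u_0\|_{H^2}$, the additional contributions from $\nabla\theta_\varepsilon$ being supported in the $\varepsilon$-strip and tamed by the same boundary-layer inequality. The standard $H^1\times L^2$ energy/inf-sup estimate for the Dirichlet problem for $\mathcal{L}_\varepsilon$ with bounded measurable oscillating coefficients then gives $\|v_\varepsilon\|_{H^1(\Omega)} + \|q_\varepsilon - \average_\Omega q_\varepsilon\|_{L^2(\Omega)} \le C\sqrt{\varepsilon}\|u_0\|_{H^2}$; combined with the estimate for $r_\varepsilon$ this yields \eqref{ethm1.1a}, and \eqref{ethm1.1b} follows on unwinding the definition of $q_\varepsilon$. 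I expect the most delicate step to be the construction and $L^2(Y)$ boundedness of the flux corrector $B$ for the Stokes system: the scalar-elliptic construction has to be adapted so that the pressure corrector $\pi$ is absorbed compatibly with the required skew-symmetry, and the absence of any regularity on $A$ forces the entire argument to remain at the $L^2$ level throughout.
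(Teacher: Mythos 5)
Your proposal is correct and follows essentially the same route as the paper: a two-scale expansion with Steklov smoothing, a dual (flux) corrector whose skew-symmetry absorbs the dangerous $\varepsilon^{-1}$, a boundary cutoff combined with the layer estimate of Lemma \ref{lemma4.0}, and the energy/inf-sup inequality for the oscillating Stokes system. Your deviations are organizational rather than conceptual, and both work. For the flux corrector, the paper solves a Stokes cell problem to get $b_{ij}^{\alpha\beta}=\partial_k\Phi_{kij}^{\alpha\beta}+\partial_\alpha q_{ij}^{\beta}$ with $\Phi$ skew in $(k,i)$ and $\pi_j^\beta=\partial_i q_{ij}^\beta$ (Lemma \ref{lemma3.1}, Remark \ref{remark3.1}), which produces the extra term $\varepsilon q^\varepsilon S_\varepsilon(\nabla^2\widetilde u_0)$ in the pressure expansion that is discarded afterwards; you instead subtract the corrector pressure from the discrepancy tensor to make it divergence-free with zero cell average and apply the standard Laplace-based skew-potential construction, which avoids the Stokes cell solve and yields the expansion with $\pi^\varepsilon S_\varepsilon(\nabla\widetilde u_0)$ directly. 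One sign slip: since $\partial_i b_{ij}^{\alpha\beta}=\partial_\alpha \pi_j^\beta$, the divergence-free tensor is $b-\pi\otimes I$, so in your orientation $\Psi$ should carry $+\pi\otimes I$ rather than $-\pi\otimes I$; with the sign as written the potential $B$ would not exist. For the boundary data, the paper introduces the auxiliary Stokes solution $(w_\varepsilon,\tau_\varepsilon)$ and bounds it through the trace/cutoff computation (\ref{estimate-w}), while you cut the corrector off in an $O(\varepsilon)$ collar and estimate the collar term directly; these are interchangeable, and your variant has the small advantage that the energy estimate for $(v_\varepsilon,q_\varepsilon)$ delivers (\ref{ethm1.1b}) in the same stroke, replacing the separate Section 4 argument via (\ref{ep}). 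Finally, the right-hand side of your system for $v_\varepsilon$ is controlled in $H^{-1}(\Omega)$ (the fluxes inside the divergences are $O(\sqrt{\varepsilon})$ in $L^2$), not in $L^2$ itself, which is in any case what the energy estimate requires.
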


We now describe the known $L^2$ convergence results on Dirichlet problems for general elliptic equations and systems with rapidly oscillating periodic coefficients. 
Consider the Dirichlet problem for the scalar elliptic equation $\mathcal{L}_\varepsilon
(u_\varepsilon)=-\text{div} \big(A(x/\varepsilon)\nabla u_\varepsilon\big)=F$ in a Lipschitz domain
$\Omega$ with  $u_\varepsilon=f$ on $\partial\Omega$.
It is well known that
\begin{equation}\label{e0.1}
\|u_\varepsilon-u_0\|_{L^2(\Omega)}\le C\varepsilon\left\{\|\nabla^2 u_0\|_{L^2(\Omega)}+\|\nabla u_0\|_{L^\infty(\partial\Omega)}\right\}.
\end{equation}
To see (\ref{e0.1}), one considers the difference between $u_\varepsilon$ and its first order approximation $u_0+\varepsilon\chi^\varepsilon\nabla u_0$ and let
\begin{equation}\label{e0.2}
v_\varepsilon=u_\varepsilon-u_0-\varepsilon\chi^\varepsilon \nabla u_0.
\end{equation}
To correct the boundary data, one further introduces a function $w_\varepsilon$, 
where $w_\varepsilon$ is the solution to the Dirichlet problem: $\mathcal{L}_\varepsilon(w_\varepsilon)=0$ in $\Omega$ and $w_\varepsilon=-\varepsilon\chi^\varepsilon\nabla u_0$ on $\partial\Omega$. 
Using energy estimates, one may show that $\|v_\varepsilon-w_\varepsilon\|_{H^1_0(\Omega)}\le C\varepsilon\|\nabla^2 u_0\|_{L^2(\Omega)}$.
The estimate (\ref{e0.1}) follows  from this and  the estimate
$\|w_\varepsilon\|_{L^\infty(\Omega)}\le C\varepsilon\|\nabla u_0\|_{L^\infty(\partial \Omega)}$, which is obtained by the maximum principle (see e.g. \cite{JikovKozlovOleinik94}). 
More recently,  Griso \cite{Griso04,Griso06} was able to establish the much sharper estimate (\ref{ethm1.2}), using the method of periodic unfolding.
We mention that in the case of scalar elliptic equations with bounded measurable coefficients,
one may also prove (\ref{ethm1.2}) by using the so-called Dirichlet corrector. In fact, it was shown in \cite{KenigLinShen1302} that
\begin{equation}\label{D-corrector-estimate}
\| u_\varepsilon -u_0 - \Big\{ \Phi_\varepsilon -x \Big\}\nabla u_0 \|_{H^1_0(\Omega)}
\le C\varepsilon\| u_0\|_{H^2(\Omega)},
\end{equation}
 where $\Phi_\varepsilon (x)$ is the solution of $\mathcal{L}_\varepsilon (\Phi_\varepsilon)=0$ in $\Omega$ with
 $\Phi_\varepsilon =x$ on $\partial\Omega$.
In the case of elliptic systems, the estimates (\ref{D-corrector-estimate}) and thus (\ref{ethm1.2})
continue to hold under the additional assumption that $A$ is H\"older continuous.
Moreover, if $A$ is H\"older continuous and symmetric, it was proved in \cite{KenigLinShen12} that
\begin{equation}\label{H-1/2}
\| v_\varepsilon \|_{H^{1/2} (\Omega)} \le C \varepsilon \| u_0\|_{H^2(\Omega)}.
\end{equation}
 
 The approaches used in \cite{KenigLinShen12, KenigLinShen1302} rely on the uniform regularity 
 estimates established in \cite{AL8701,KenigShen1101} and do not apply to operators with bounded measurable coefficients.
 Recently, by using the Steklov smoothing operator, T.A. Suslina \cite {Suslina1301,Suslina1302} was able to establish the $O(\varepsilon)$
 estimate (\ref{ethm1.2}) in $L^2$ for a boarder class of elliptic operators, which, in particular, contains the
 elliptic systems $\mathcal{L}_\varepsilon$ in divergence form with coefficients satisfying the
 ellipticity condition $a_{ij}^{\alpha\beta} \xi_i^\alpha\xi_j^\beta \ge \mu |\xi|^2$
 for any $\xi = \big(\xi_i^\alpha \big)\in \mathbb{R}^{m\times d}$.
 Since the correctors $\chi$ may not be bounded in the case of nonsmooth coefficients, 
 the idea is to consider the two-scale expansion 
 \begin{equation}\label{two-scale-1}
 v_\varepsilon =u_\varepsilon -u_0 -\varepsilon \chi^\varepsilon S_\varepsilon (\nabla \widetilde{u}_0),
 \end{equation}
 where $S_\varepsilon$ is a smoothing operator at scale $\varepsilon$ and $\widetilde{u}_0$ 
 an extension of $u_0$ to $\mathbb{R}^d$ (also see  \cite{Pastukhova06,PakhninSuslina13,ZhikovPastukhova05} and their references on the use of $S_\varepsilon$ in homogenization).
 This reduces the problem to the control of the $L^2$ norm of $w_\varepsilon$, 
 where $w_\varepsilon$ is the solution to the Dirichlet problem: $\mathcal{L}_\varepsilon(w_\varepsilon)=0$ in $\Omega$ and $w_\varepsilon=-\varepsilon\chi^\varepsilon S_\varepsilon\nabla (\widetilde{u}_0)$ on $\partial\Omega$.
 Next, one considers 
 $$
 h_\varepsilon=w_\varepsilon -\varepsilon \chi^\varepsilon \theta_\varepsilon S_\varepsilon (\nabla \widetilde{u}_0),
 $$
 where $\theta_\varepsilon$ is a cutoff function supported in an $\varepsilon$ neighborhood of $\partial\Omega$.
 Note that $h_\varepsilon=0$ on $\partial\Omega$ and
 $\mathcal{L}_\varepsilon (h_\varepsilon)$ is supported in an $\varepsilon$ neighborhood of $\partial\Omega$.
 This allows one to approximate $h_\varepsilon$ in the  $L^2$ norm by $h_0$, using an $O(\sqrt{\varepsilon})$
 estimate in $H^1$ and a duality argument, 
 where $\mathcal{L}_0 (h_0)=\mathcal{L}_\varepsilon (h_\varepsilon)$ in $\Omega$ and
 $h_0=0$ on $\partial\Omega$. Finally, one estimates the $L^2$ norm of $h_0$ by another duality argument.
 
 In this paper we extend the approach of Suslina to the case of Stokes systems, 
 which do not fit the standard framework 
 of second-order elliptic systems in divergence form.
 As expected in the study of Stokes or Navies-Stokes systems, the main difficulty 
 is caused by the pressure term $p_\varepsilon$.
 By carefully analyzing the systems for the correctors $(\chi, \pi)$ as well as their dual $(\phi_{kin}^{\alpha\beta}, q_{ij}^\beta)$
 (see Lemmas 3.1 and 3.3),
 we are able to establish the $O(\sqrt{\varepsilon})$ error estimates, given in Theorem \ref{theorem1.1},
for the two-scale expansions of $(u_\varepsilon, p_\varepsilon)$
 in $H^1\times  L^2$. This allows us to use the idea of boundary cutoff and duality argument
 in a manner similar to that  in \cite{Suslina1301}.
 
 The paper is organized as follows.
In Section 2 we recall a few basic properties of the Steklov smoothing operator $S_\varepsilon$ as well as
 the homogenization theory for Stokes systems with periodic coefficients.  In Section 3 we study
 $u_0+\varepsilon\chi^\varepsilon S_\varepsilon \nabla\widetilde{u}_0$
as the first order approximation of $u_\varepsilon$. We introduce the dual correctors $(\Phi, q)$
and use energy estimates to establish the estimate (\ref{ethm1.1a}) in $H^1$. 
In Section 4 we study  the convergence of $p_\varepsilon$
and prove the error estimate  (\ref{ethm1.1b}) for the two-scale expansion of the pressure term.
 Finally, our main theorem Theorem \ref{theorem1.2} is proved  in Section 5.
 This is done by using the idea of boundary cutoff and duality, and by
 applying error estimates obtained in Sections 3 and 4 to the adjoint systems.

Throughout this paper, we denote $Y=[0,1)^d$ and the $L^1$ average of $f$ over the set $E$ by
$$
\average_E f= \frac{1}{|E|}\int_E f.
$$
We will use $C$ to denote constants that may depend on $d$, $\mu$, or $\Omega$, but never on $\varepsilon$.
\bf{Acknowledgement.} \rm The author would like to thank referees for their very helpful comments and suggestions.
\section{Preliminaries}
\setcounter{equation}{0}
\subsection{Smoothing in Steklov's sense}
Let $S_\varepsilon$ be the operator on $L^2(\mathbb{R}^d)$ given by 
\begin{equation}\label{Steklov}
(S_\varepsilon u)(x)=\average_Y u(x-\varepsilon z)dz
\end{equation}
and called the \textit{Steklov smoothing operator}. Note that
$$
\|S_\varepsilon u\|_{L^2(\mathbb{R}^d)} \le \|u\|_{L^2(\mathbb{R}^d)}.
$$ 
Obviously, $D^{\alpha}S_\varepsilon u=S_\varepsilon D^\alpha u$ for $u\in H^s(\mathbb{R}^d)$ and any 
multi-index $\alpha$ such that $|\alpha|\le s$. Therefore,
$$
\|S_\varepsilon u\|_{H^s(\mathbb{R}^d)} \le \|u\|_{H^s(\mathbb{R}^d)}.
$$
The following are a few properties of Steklov's operator; see \cite{Suslina1301,Suslina1302}.

\begin{prop}\label{prop2.1}
For any $u\in H^1(\mathbb{R}^d)$ we have
$$
\|S_\varepsilon u-u\|_{L^2(\mathbb{R}^d)} \le C\varepsilon \|\nabla u\|_{L^2(\mathbb{R}^d)},
$$
where $C$ depends only on $d$.
\end{prop}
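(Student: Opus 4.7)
My plan is to reduce the claim to a one-line application of the fundamental theorem of calculus followed by Jensen's inequality and Fubini, using the translation invariance of Lebesgue measure. By a density argument it suffices to prove the estimate for $u\in C^\infty_c(\mathbb{R}^d)$, since $S_\varepsilon$ is a bounded operator on $L^2(\mathbb{R}^d)$ and the gradient norm is continuous on $H^1$.

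For such $u$, I would write the pointwise difference as an average over $Y=[0,1)^d$:
$$
(S_\varepsilon u)(x)-u(x)=\fint_Y \bigl[u(x-\varepsilon z)-u(x)\bigr]\,dz = -\varepsilon \fint_Y \int_0^1 z\cdot \nabla u(x-t\varepsilon z)\,dt\,dz.
$$
Taking absolute values, noting that $|z|\le \sqrt{d}$ on $Y$, and applying Jensen's inequality (or Cauchy--Schwarz) to pull the square inside the two averages, I obtain
$$
\bigl|(S_\varepsilon u)(x)-u(x)\bigr|^2 \le \varepsilon^2 d \fint_Y \int_0^1 \bigl|\nabla u(x-t\varepsilon z)\bigr|^2\,dt\,dz.
$$

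Integrating this estimate over $x\in\mathbb{R}^d$, Fubini's theorem allows me to swap the spatial integral with the $dz\,dt$ averages. For each fixed $(t,z)$, the change of variables $x\mapsto x-t\varepsilon z$ leaves the $L^2$ norm of $\nabla u$ unchanged, yielding
$$
\|S_\varepsilon u-u\|_{L^2(\mathbb{R}^d)}^2 \le \varepsilon^2 d \,\|\nabla u\|_{L^2(\mathbb{R}^d)}^2,
$$
so $C=\sqrt{d}$ works. Density of $C^\infty_c$ in $H^1$ then extends the inequality to all $u\in H^1(\mathbb{R}^d)$.

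There is no real obstacle here: the argument is a textbook Poincaré-type estimate adapted to the averaging kernel over the unit cube. The only point that requires any care is making sure the fundamental theorem of calculus step is applied to a smooth representative, and that the constant depends only on the diameter of $Y$ (hence only on $d$), which is automatic from the bound $|z|\le\sqrt{d}$.
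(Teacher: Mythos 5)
Your argument is correct: the representation
$$
(S_\varepsilon u)(x)-u(x)=\int_Y \bigl[u(x-\varepsilon z)-u(x)\bigr]\,dz
=-\varepsilon \int_Y \int_0^1 z\cdot \nabla u(x-t\varepsilon z)\,dt\,dz
$$
is valid for smooth compactly supported $u$ (note $|Y|=1$, so the average is just the integral), the Cauchy--Schwarz/Jensen step with $|z|\le\sqrt d$ on $Y$ is right, and Fubini plus translation invariance gives $\|S_\varepsilon u-u\|_{L^2(\mathbb{R}^d)}\le \sqrt d\,\varepsilon\,\|\nabla u\|_{L^2(\mathbb{R}^d)}$; the extension to all of $H^1(\mathbb{R}^d)$ by density is legitimate since both sides are continuous in the $H^1$ topology ($S_\varepsilon$ being a contraction on $L^2$). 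The paper itself does not prove this proposition; it cites Suslina, where the standard proof is on the Fourier side: by Plancherel, $S_\varepsilon$ acts as multiplication by the normalized Fourier transform of the indicator of $Y$ evaluated at $\varepsilon\xi$, and one bounds the deviation of this symbol from $1$ by $C\varepsilon|\xi|$. So your route is genuinely different: it is a real-space, first-order Taylor argument. What it buys is elementarity and an explicit constant $C=\sqrt d$, and it only uses translation invariance of Lebesgue measure, so it transfers verbatim to $L^p$-based estimates; the Fourier proof is shorter once Plancherel is available and meshes with Suslina's operator-theoretic framework, but is tied to $L^2$. Either proof suffices for the way Proposition \ref{prop2.1} is used in this paper.
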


We will use  the notation $f^\varepsilon(x)=f(x/\varepsilon)$.

\begin{prop}\label{prop2.2}
Let $f(x)$ be a $1$-periodic function in $\mathbb{R}^d$ such that $f\in L^2(Y)$.  Then for any $u\in L^2(\mathbb{R}^d)$,
$$
\|f^\varepsilon S_\varepsilon u\|_{L^2(\mathbb{R}^d)} \le \|f\|_{L^2(Y)}\|u\|_{L^2(\mathbb{R}^d)}.
$$
\end{prop}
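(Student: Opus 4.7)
The plan is to unwind the definition of $S_\varepsilon$, apply Jensen's (or Cauchy--Schwarz) inequality inside the integral, swap the order of integration, and exploit the $1$-periodicity of $f$ to evaluate the resulting average.

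More concretely, I would start from
$$
|S_\varepsilon u(x)|^2 = \Big|\average_Y u(x-\varepsilon z)\,dz\Big|^2 \le \average_Y |u(x-\varepsilon z)|^2\,dz
$$
by Jensen's inequality applied to the probability measure $dz$ on $Y$. Multiplying by $|f(x/\varepsilon)|^2$ and integrating over $x\in\mathbb{R}^d$, Fubini lets me bring $\average_Y dz$ to the outside:
$$
\int_{\mathbb{R}^d}|f^\varepsilon(x)|^2\,|S_\varepsilon u(x)|^2\,dx \le \average_Y \int_{\mathbb{R}^d}|f(x/\varepsilon)|^2\,|u(x-\varepsilon z)|^2\,dx\,dz.
$$
For each fixed $z\in Y$, I would substitute $y=x-\varepsilon z$ in the inner integral, turning $|f(x/\varepsilon)|^2$ into $|f(y/\varepsilon+z)|^2$ and leaving $dy$ unchanged.

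At this stage the estimate becomes
$$
\int_{\mathbb{R}^d}|u(y)|^2 \average_Y |f(y/\varepsilon+z)|^2\,dz\,dy,
$$
after another application of Fubini. The decisive step is that, by $1$-periodicity of $f$ (and hence of $|f|^2$), for every fixed $w\in\mathbb{R}^d$ the average $\average_Y |f(w+z)|^2\,dz$ equals $\average_Y |f(z)|^2\,dz = \|f\|_{L^2(Y)}^2$ (since $|Y|=1$). Plugging this in yields $\|f^\varepsilon S_\varepsilon u\|_{L^2(\mathbb{R}^d)}^2 \le \|f\|_{L^2(Y)}^2 \|u\|_{L^2(\mathbb{R}^d)}^2$, and taking square roots closes the argument.

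There is no real obstacle here; the only point to watch is the translation-invariance of the $Y$-average of a $\mathbb{Z}^d$-periodic function, which is what makes the whole computation collapse to the product of norms and is the exact reason the Steklov smoothing is the right device to pair with oscillating periodic coefficients.
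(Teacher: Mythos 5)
Your argument is correct and complete: Jensen's inequality for the unit-mass average over $Y$, Tonelli to exchange the integrals, the substitution $y=x-\varepsilon z$, and the observation that $\average_Y |f(w+z)|^2\,dz=\|f\|^2_{L^2(Y)}$ for any $w$ by $\mathbb{Z}^d$-periodicity of $|f|^2$ are exactly what is needed. The paper itself states this proposition without proof, deferring to the cited works of Suslina, where essentially the same computation is carried out, so your proof matches the standard one.
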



\subsection{Homogenization of Stokes systems}

We refer the reader to \cite{Lions,GuShen15} for details of weak solutions and homogenization theory of Stokes system. 

Let $\Omega$ be a bounded Lipschitz domain in $\mathbb{R}^d$. For $u,v \in H^1(\Omega;\mathbb{R}^d)$, we define the bilinear form $a_\varepsilon(\cdot,\cdot)$ by
$$
a_\varepsilon(u,v)=\int_\Omega a_{ij}^{\alpha\beta}\left(\frac{x}{\varepsilon}\right)\frac{\partial u^\beta}{\partial x_j}\frac{\partial v^\alpha}{\partial x_i} dx.
$$
For $F\in H^{-1}(\Omega;\mathbb{R}^d)$ and $g\in L^2(\Omega)$, we say that $(u_\varepsilon,p_\varepsilon)\in H^1(\Omega;\mathbb{R}^d)\times L^2(\Omega)$ is a weak solution of the following Stokes system in $\Omega$,
\begin{equation}\label{Stokes}
\left\{
\begin{aligned}
\mathcal{L}_\varepsilon(u_\varepsilon)+\nabla p_\varepsilon &= F \\
\text{ div }u_\varepsilon &=g, \\
\end{aligned}
\right.
\end{equation}
if for any $\varphi \in C_0^1(\Omega;\mathbb{R}^d)$,
$$
a_\varepsilon(u_\varepsilon,\varphi)-\int_\Omega p_\varepsilon \text{ div}(\varphi)=\langle F,\varphi\rangle
$$
and $\text{div}(u_\varepsilon)=g$ in $\Omega$ (in the sense of distribution).

\begin{theorem}\label{weak-solution-theorem}
Let $\Omega$ be a bounded Lipschitz domain in $\mathbb{R}^d$. Suppose $A(y)$ satisfies the ellipticity condition (\ref{Ellipticity}). Let $F\in H^{-1}(\Omega;\mathbb{R}^d)$, $g\in L^2(\Omega)$ and $f\in H^{1/2}(\partial\Omega;\mathbb{R}^d)$ satisfy the compatibility condition (\ref{Compatibility}). Then there exist a unique $u_\varepsilon \in H^1(\Omega;\mathbb{R}^d)$ and $p_\varepsilon \in L^2(\Omega)$, unique up to constants, such that $(u_\varepsilon,p_\varepsilon)$ is a weak solution of (\ref{Stokes}) and $u_\varepsilon=f$ on $\partial\Omega$.
Moreover,
\begin{equation}\label{energy}
\|u_\varepsilon\|_{H^1(\Omega)}+\|p_\varepsilon-\average_\Omega p_\varepsilon\|_{L^2(\Omega)} 
\le C\Big\{\|F\|_{H^{-1}(\Omega)}+\|g\|_{L^2(\Omega)}+\|f\|_{H^{1/2}(\partial\Omega)}\Big\},
\end{equation}
where $C$ depends only on $d$, $\mu$, and $\Omega$.
\end{theorem}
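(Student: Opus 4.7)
The plan is to reduce the problem with inhomogeneous data to the standard variational problem on divergence-free vector fields in $H^1_0(\Omega;\mathbb{R}^d)$, apply Lax--Milgram there, and then recover the pressure via a de Rham / Ne\v{c}as argument. More precisely, I would proceed as follows.

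First, I would lift the boundary and divergence data by constructing $u^*\in H^1(\Omega;\mathbb{R}^d)$ such that $u^*=f$ on $\partial\Omega$, $\mathrm{div}(u^*)=g$ in $\Omega$, and $\|u^*\|_{H^1(\Omega)} \le C\big(\|f\|_{H^{1/2}(\partial\Omega)}+\|g\|_{L^2(\Omega)}\big)$. This is standard on a bounded Lipschitz domain: first take any $H^1$ extension of $f$ into $\Omega$, then correct its divergence using a Bogovski\u\i-type right inverse of $\mathrm{div}:H^1_0(\Omega;\mathbb{R}^d)\to L^2_0(\Omega)$, where $L^2_0$ denotes mean-zero functions. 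The compatibility condition (\ref{Compatibility}) is exactly what guarantees that the source for this correction has zero mean, so the Bogovski\u\i\ operator applies. Setting $u_\varepsilon=u^*+v_\varepsilon$, the problem reduces to finding $v_\varepsilon\in H^1_0(\Omega;\mathbb{R}^d)$ with $\mathrm{div}(v_\varepsilon)=0$ and
$$
a_\varepsilon(v_\varepsilon,\varphi)-\int_\Omega p_\varepsilon\,\mathrm{div}(\varphi) = \langle F,\varphi\rangle - a_\varepsilon(u^*,\varphi) =: \langle\widetilde F,\varphi\rangle,
$$
for all $\varphi\in C_0^1(\Omega;\mathbb{R}^d)$, where $\widetilde F\in H^{-1}(\Omega;\mathbb{R}^d)$ with $\|\widetilde F\|_{H^{-1}}\le C(\|F\|_{H^{-1}}+\|u^*\|_{H^1})$.

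Next, I would apply the Lax--Milgram lemma on the Hilbert space $V=\{v\in H^1_0(\Omega;\mathbb{R}^d):\mathrm{div}(v)=0\}$. The bilinear form $a_\varepsilon$ is clearly bounded on $H^1_0$ by the upper bound in (\ref{Ellipticity}). Coercivity on $V$ follows from the lower bound in (\ref{Ellipticity}) combined with the Poincar\'e inequality, giving $a_\varepsilon(v,v)\ge \mu\|\nabla v\|_{L^2}^2 \ge c\|v\|_{H^1}^2$. Hence there exists a unique $v_\varepsilon\in V$ with $a_\varepsilon(v_\varepsilon,\varphi)=\langle\widetilde F,\varphi\rangle$ for all $\varphi\in V$, together with the bound $\|v_\varepsilon\|_{H^1}\le C\|\widetilde F\|_{H^{-1}}$.

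To produce the pressure, consider the bounded linear functional $T:H^1_0(\Omega;\mathbb{R}^d)\to\mathbb{R}$, $T(\varphi)=a_\varepsilon(v_\varepsilon,\varphi)-\langle\widetilde F,\varphi\rangle$. By construction $T$ vanishes on $V=\ker(\mathrm{div})$, so by de Rham's theorem (equivalently, by the surjectivity of $\mathrm{div}:H^1_0\to L^2_0$ on Lipschitz domains, which is in turn equivalent to the Ne\v{c}as inequality $\|p-\overline{p}\|_{L^2}\le C\|\nabla p\|_{H^{-1}}$) there exists $p_\varepsilon\in L^2(\Omega)$, unique up to an additive constant, with $T(\varphi)=\int_\Omega p_\varepsilon\,\mathrm{div}(\varphi)$ for every $\varphi\in H^1_0(\Omega;\mathbb{R}^d)$. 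This yields the weak-form identity and, by the inf--sup / Ne\v{c}as inequality, $\|p_\varepsilon-\fint_\Omega p_\varepsilon\|_{L^2}\le C\|T\|_{H^{-1}}$. Combining with the bounds on $v_\varepsilon$ and $u^*$ gives the energy estimate (\ref{energy}).

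The main obstacle is not the abstract variational step but the two pieces of hard analysis on a bounded Lipschitz domain: the existence of a bounded right inverse for the divergence (used both in Step 1 for the lifting of $(f,g)$ and in Step 4 to recover the pressure). Both ultimately rest on the Ne\v{c}as/Bogovski\u\i\ theory, and the compatibility condition (\ref{Compatibility}) is precisely the solvability condition that makes the lifting possible. Uniqueness of $u_\varepsilon$ follows from coercivity on $V$, and uniqueness of $p_\varepsilon$ up to constants from the de Rham argument.
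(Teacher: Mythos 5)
Your argument is correct and is essentially the route the paper intends: the paper disposes of this theorem in one line (``proved by using the Lax--Milgram Theorem,'' citing the standard references), and your proposal is the standard fleshed-out version of exactly that --- lifting $(f,g)$ via an $H^{1/2}$ extension plus a Bogovski\u{\i} correction (where the compatibility condition enters), Lax--Milgram on the divergence-free subspace of $H^1_0(\Omega;\mathbb{R}^d)$, and recovery of the pressure with the estimate $\|p_\varepsilon-\average_\Omega p_\varepsilon\|_{L^2(\Omega)}\le C\|\nabla p_\varepsilon\|_{H^{-1}(\Omega)}$ via de Rham/Ne\v{c}as, which is also the inequality the paper itself invokes later as (\ref{ep}).
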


Theorem \ref{weak-solution-theorem} is proved by using the Lax-Milgram Theorem.
We mention that if $\Omega$ is $C^{1,1}$ and $A$ is a constant matrix, the weak solution $(u, p)$,
given by Theorem \ref{weak-solution-theorem}, is in $H^2(\Omega; \mathbb{R}^d)\times H^1(\Omega)$, 
provided that $F\in L^2(\Omega; \mathbb{R}^d)$, $g\in H^1(\Omega)$ and $f\in H^{3/2}(\partial\Omega; \mathbb{R}^d)$.
Moreover, 
\begin{equation}\label{reg}
\|u\|_{H^2(\Omega)}+\|\nabla  p\|_{L^2(\Omega)} 
\le C\Big\{\|F\|_{L^2(\Omega)}+\|g\|_{H^1(\Omega)}+\|f\|_{H^{3/2}(\partial\Omega)}\Big\},
\end{equation}
where $C$ depends only on $d$, $\mu$, and $\Omega$ (see e.g. \cite{GiaquintaModica82}).

We denote by $H_{\text{per}}^1(Y;\mathbb{R}^d)$ the closure in $H^1(Y;\mathbb{R}^d)$ of 
$C_{\text{per}}^\infty(Y;\mathbb{R}^d)$, the set of $C^\infty$ 1-periodic and $\mathbb{R}^d$-valued functions 
in $\mathbb{R}^d$. Let
$$
a_{\text{per}}(\psi,\phi)=\int_Y a_{ij}^{\alpha\beta}(y)\frac{\partial \psi^\beta}{\partial x_j}\frac{\partial \phi^\alpha}{\partial x_i} dy,
$$
where $\psi,\phi \in H^1_{\text{per}}(Y;\mathbb{R}^d)$. 
Define
$$
V_{\text{per}}(Y)=\left\{u\in H_{\text{per}}^1(Y;\mathbb{R}^d): \text{ div}(u)=0 \text{ in }Y \text{ and }\int_Y u=0 \right\}.
$$
By applying the Lax-Milgram Theorem to $a_{\text{per}} (\psi, \phi)$ on
the Hilbert space $V_{\text{per}} (Y)$, one may show that
for each $1\le j,\beta \le d$, there exist 1-periodic functions $(\chi_j^\beta,\pi_j^\beta)\in H^1_{\text{loc}}(\mathbb{R}^d;\mathbb{R}^d)\times L^2_{\text{loc}}(\mathbb{R}^d)$, which are called the correctors for the Stokes system (\ref{Stokes}), such that
\begin{equation}\label{Corrector}
\left\{
\begin{aligned}
\mathcal{L}_1(\chi_j^\beta+P_j^\beta)+\nabla \pi_j^\beta &= 0 \quad \text{in }\mathbb{R}^d, \\
\text{ div }\chi_j^\beta &=0 \quad \text{in }\mathbb{R}^d,\\
\int_Y \pi_j^\beta=0, \int_Y \chi_j^\beta &=0,\\
\end{aligned}
\right.
\end{equation}
where $P_j^\beta=P_j^\beta(y)=y_j e^\beta=y_j(0,\cdots,1,\cdots,0)$ with 1 in the $\beta^{\text{th}}$ position. Note that
$$
\|\chi_j^\beta\|_{H^1(Y)}+\|\pi_j^\beta\|_{L^2(Y)} \le C,
$$
where $C$ depends only on $d$ and $\mu$. The homogenized system for the Stokes system (\ref{Stokes}) is given by
\begin{equation}\label{Stokes0}
\left\{
\begin{aligned}
\mathcal{L}_0(u_0)+\nabla p_0 &= F \\
\text{ div }u_0 &=g, \\
\end{aligned}
\right.
\end{equation}
where $\mathcal{L}_0=-\text{div}(\widehat{A}\nabla)$ is a second-order elliptic operator with constant coefficients, and $\widehat{A}=(\widehat{a}_{ij}^{\alpha\beta})$, with
\begin{equation*}
\widehat{a}_{ij}^{\alpha\beta}=a_{\text{per}}(\chi_j^\beta+P_j^\beta,\chi_i^\alpha+P_i^\alpha).
\end{equation*}
We  remark that $(\widehat{A})^*=\widehat{A^*}$, and the effective matrix $\widehat{A}$ satisfies the ellipticity condition $\mu|\xi|^2\le \widehat{a}_{ij}^{\alpha\beta}\xi_i^\alpha\xi_j^\beta \le \mu_1|\xi|^2$, for any $\xi\in \mathbb{R}^{d\times d}$ and $\mu_1$ depends only on $d$ and $\mu$. The following is a homogenization theorem for the Stokes system.

\begin{theorem}
Suppose that $A(y)$ satisfies ellipticity condition (\ref{Ellipticity}) and periodicity condition (\ref{Periodicity}). Let $\Omega$ be a bounded Lipschitz domain. Let $(u_\varepsilon,p_\varepsilon)\in H^1(\Omega;\mathbb{R}^d)\times L^2(\Omega)$ be a weak solution of (\ref{DirichletStokes}), where $F\in H^{-1}(\Omega;\mathbb{R}^d)$, $g\in L^2(\Omega)$ and $f\in H^{1/2}(\partial\Omega;\mathbb{R}^d)$. Assume that $\int_\Omega p_\varepsilon=0$. Then, as $\varepsilon \rightarrow 0$,
$$
\left\{
\begin{aligned}
u_\varepsilon &\rightarrow u_0 \quad\text{\rm strongly in }L^2(\Omega;\mathbb{R}^d),\\
u_\varepsilon &\rightharpoonup u_0 \quad\text{\rm weakly in }H^1(\Omega;\mathbb{R}^d),\\
p_\varepsilon &\rightharpoonup p_0 \quad\text{\rm weakly in }L^2(\Omega),\\
A(x/\varepsilon)\nabla u_\varepsilon &\rightharpoonup \widehat{A}\nabla u_0 \quad\text{\rm weakly in }L^2(\Omega;\mathbb{R}^{d\times d}).\\
\end{aligned}
\right.
$$
Moreover, $(u_0,p_0)$ is the weak solution of the homogenized problem (\ref{DirichletStokes0}).
\end{theorem}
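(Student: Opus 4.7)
The plan is to prove this homogenization theorem by the classical method of oscillating test functions (Tartar's method), adapted to the Stokes setting by working with the correctors of the adjoint system to eliminate the pressure appropriately.

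First I would extract uniform bounds. Applying Theorem \ref{weak-solution-theorem} to $(u_\varepsilon,p_\varepsilon)$, using the normalization $\int_\Omega p_\varepsilon=0$, gives
$$
\|u_\varepsilon\|_{H^1(\Omega)}+\|p_\varepsilon\|_{L^2(\Omega)}\le C\Big\{\|F\|_{H^{-1}(\Omega)}+\|g\|_{L^2(\Omega)}+\|f\|_{H^{1/2}(\partial\Omega)}\Big\}
$$
uniformly in $\varepsilon$. Since $\|A(\cdot/\varepsilon)\|_{L^\infty}\le\mu^{-1}$, the flux $A(x/\varepsilon)\nabla u_\varepsilon$ is also bounded in $L^2(\Omega;\mathbb{R}^{d\times d})$. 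Then I pass to a subsequence so that $u_\varepsilon\rightharpoonup u_*$ in $H^1(\Omega;\mathbb{R}^d)$, $u_\varepsilon\to u_*$ in $L^2(\Omega;\mathbb{R}^d)$ (Rellich--Kondrachov), $p_\varepsilon\rightharpoonup p_*$ in $L^2(\Omega)$, and $A(x/\varepsilon)\nabla u_\varepsilon\rightharpoonup \xi$ in $L^2(\Omega;\mathbb{R}^{d\times d})$. Continuity of the trace shows $u_*=f$ on $\partial\Omega$, and passing to the limit in $\mathrm{div}(u_\varepsilon)=g$ and in the weak form of the first equation gives
$$
-\mathrm{div}(\xi)+\nabla p_*=F\quad\text{and}\quad \mathrm{div}(u_*)=g\quad\text{in }\Omega.
$$

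The heart of the argument, and the main obstacle, is identifying $\xi=\widehat A\nabla u_*$. For this I would use the adjoint correctors $(\chi_j^{\beta,*},\pi_j^{\beta,*})$ associated with $A^*$ (whose existence is given by the Lax--Milgram construction recalled in the excerpt), which satisfy $\mathcal L_1^*(\chi_j^{\beta,*}+P_j^\beta)+\nabla\pi_j^{\beta,*}=0$ and $\mathrm{div}(\chi_j^{\beta,*})=0$ in $\mathbb R^d$. Fix $\varphi\in C_0^\infty(\Omega)$ and test the $u_\varepsilon$-equation against the oscillating divergence-free corrections $\varphi\,(P_j^\beta+\varepsilon\chi_j^{\beta,*,\varepsilon})$, while testing the adjoint corrector equation against $\varphi\,u_\varepsilon^\beta$. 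Subtracting the two identities, the second-order terms producing oscillations cancel, and what remains is an expression of the form
$$
\int_\Omega \varphi\,\Big[a_{ij}^{\alpha\beta}(x/\varepsilon)\partial_j u_\varepsilon^\beta(P_i^\alpha)_{,i}\text{-type terms}\Big]-\int_\Omega \big(p_\varepsilon\,\mathrm{div}+\text{pressure-corrector}\big)\cdots
$$
that can be handled by the div--curl lemma. Because the test field is built to be divergence-free in the bulk, the two pressure contributions vanish against it, which is the key cancellation that lets Tartar's identification go through in the Stokes setting. Passing to the limit yields $\int_\Omega \varphi\,\xi_i^\alpha P_{j,i}^\beta=\int_\Omega \varphi\,\widehat a_{ij}^{\alpha\beta}\partial_j u_*^\beta$ for all $\varphi$, i.e.\ $\xi=\widehat A\nabla u_*$.

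Combining, $(u_*,p_*)$ is a weak solution of (\ref{DirichletStokes0}) with the prescribed data, which is unique (up to the normalization on $p_*$) by Theorem \ref{weak-solution-theorem} applied to the constant-coefficient operator $\mathcal L_0$. Hence $u_*=u_0$, $p_*=p_0$, and by uniqueness of the limit the full family (not merely a subsequence) converges in the claimed modes. The strong convergence of $u_\varepsilon$ in $L^2$ is just the Rellich step, the weak convergences in $H^1$ and $L^2$ are by construction, and the flux convergence is the content of the identification step.
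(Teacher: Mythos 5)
First, note that the paper does not actually prove this theorem: it is stated in Section 2 as background, with the proof deferred to the references \cite{Lions,GuShen15}. So the comparison is with the standard literature argument, which is indeed the Tartar oscillating-test-function / compensated compactness method you outline. Your overall plan (uniform energy bounds from Theorem \ref{weak-solution-theorem}, subsequence extraction, passage to the limit in the weak formulation, identification of the flux via adjoint correctors $(\chi_j^{\beta,*},\pi_j^{\beta,*})$, then uniqueness of $(u_0,p_0)$ to upgrade from subsequences to the full family) is the right and standard route, and all of the compactness and uniqueness steps are fine.

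The one place where your justification is wrong as stated is precisely the step you call ``the key cancellation.'' The test field $\varphi\,(P_j^\beta+\varepsilon\chi_j^{\beta,*,\varepsilon})$ is \emph{not} divergence-free, not even ``in the bulk'': $\operatorname{div}(P_j^\beta)=\delta_{j\beta}$, and the cutoff contributes $\nabla\varphi$ terms, so $\operatorname{div}\big(\varphi(P_j^\beta+\varepsilon\chi_j^{\beta,*,\varepsilon})\big)=\nabla\varphi\cdot(P_j^\beta+\varepsilon\chi_j^{\beta,*,\varepsilon})+\varphi\,\delta_{j\beta}$ (only $\operatorname{div}\chi_j^{\beta,*}=0$ helps). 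Consequently the two pressure contributions do not vanish and cannot be discarded; they must be passed to the limit. The correct mechanism is weak--strong pairing: $\int_\Omega p_\varepsilon\,\operatorname{div}\big(\varphi(P_j^\beta+\varepsilon\chi_j^{\beta,*,\varepsilon})\big)$ converges because $p_\varepsilon\rightharpoonup p_*$ weakly in $L^2$ while the divergence converges strongly in $L^2$ (since $\varepsilon\chi^{*,\varepsilon}\to0$ in $L^2$); moreover this term must survive, since in the final bookkeeping it produces exactly the $\int_\Omega p_*\operatorname{div}(\varphi P_j^\beta)$ that cancels against the limit equation $-\operatorname{div}\xi+\nabla p_*=F$ tested with $\varphi P_j^\beta$. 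The corrector-pressure term $\int_\Omega \pi_j^{\beta,*}(x/\varepsilon)\,\operatorname{div}(\varphi u_\varepsilon)=\int_\Omega \pi_j^{\beta,*,\varepsilon}\big(\nabla\varphi\cdot u_\varepsilon+\varphi g\big)$ tends to zero not by divergence-freeness but because $\pi_j^{\beta,*,\varepsilon}\rightharpoonup\average_Y\pi_j^{\beta,*}=0$ while $\nabla\varphi\cdot u_\varepsilon+\varphi g$ converges strongly in $L^2$ by Rellich. With these corrections (and the standard facts $\varepsilon\chi^{*,\varepsilon}\to0$ in $L^2$, $(\nabla\chi^*)^\varepsilon\rightharpoonup0$, and $A^{*}(x/\varepsilon)\big(\nabla P_j^\beta+(\nabla\chi_j^{\beta,*})^\varepsilon\big)\rightharpoonup\widehat{A^*}\nabla P_j^\beta$, the last using $(\widehat{A})^*=\widehat{A^*}$), the identification $\xi=\widehat A\nabla u_*$ goes through exactly as in the classical elliptic case, so the gap is one of justification rather than of strategy, and it is routinely repairable.
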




\section{Convergence rates for $u_\varepsilon$ in $H^1$}
\setcounter{equation}{0}
From now on we will assume that
 $\Omega$ is a bounded domain with boundary of class $C^{1,1}$, $F\in L^2(\Omega; \mathbb{R}^d)$,
 $g\in H^1(\Omega)$, and $f\in H^{3/2}(\partial\Omega; \mathbb{R}^d)$.
  We fix a linear continuous extension operator 
\begin{equation*}
E_\Omega: H^2(\Omega;\mathbb{R}^d) \rightarrow H^2(\mathbb{R}^d;\mathbb{R}^d),
\end{equation*}
and let
\begin{equation}\label{defextension}
\widetilde{u}_0=E_\Omega u_0,
\end{equation} 
so that $\widetilde{u}_0 =u_0$ in $\Omega$ and
\begin{equation}\label{extension}
\|\widetilde{u}_0\|_{H^2(\mathbb{R}^d)} \le C\|u_0\|_{H^2(\Omega)},
\end{equation}
where $C$ depends on $\Omega$.
We introduce a first order approximation of $u_\varepsilon$,
$$
v_\varepsilon= u_0+\varepsilon \chi^\varepsilon S_\varepsilon (\nabla \widetilde{u}_0).
$$
Let $(w_\varepsilon,\tau_\varepsilon) \in H^1(\Omega;\mathbb{R}^d)\times L^2(\Omega)$  be a weak solution of
\begin{equation}\label{w}
\left\{
\aligned
\mathcal{L}_\varepsilon(w_\varepsilon)+\nabla \tau_\varepsilon&=0  &\quad  &\text{ in }\Omega,\\
\text{ div}(w_\varepsilon)&= \varepsilon \text{ div}\big(\chi^\varepsilon S_\varepsilon \nabla \widetilde{u}_0\big)&\quad &\text{ in }\Omega,\\
w_\varepsilon&=\varepsilon \chi^\varepsilon S_\varepsilon (\nabla \widetilde{u}_0)
&\quad &\text{ on }\partial\Omega.\\
\endaligned
\right.
\end{equation}
We will use $w_\varepsilon$ to approximate the difference between $u_\varepsilon$ and its first order approximation $v_\varepsilon$.  
To this end, for $1\le i,j,\alpha,\beta \le d$, we let
\begin{equation}\label{b}
b_{ij}^{\alpha\beta}(y)=a_{ij}^{\alpha\beta}(y)+
a_{ik}^{\alpha\gamma}(y)\frac{\partial}{\partial y_k}\big(\chi_{j}^{\gamma\beta}\big)-\widehat{a}_{ij}^{\alpha\beta}.
\end{equation}
Note that $b_{ij}^{\alpha\beta}$ is $1$-periodic. By the definition of $\chi$ and $\widehat{A}$, $b_{ij}^{\alpha\beta}\in L^2(Y)$ satisfies
$$
\int_Y b_{ij}^{\alpha\beta}(y)\, dy=0.
$$
and, for each $1\le \alpha, \beta, j\le d$,
\begin{equation}\label{b1}
\aligned
\frac{\partial}{\partial y_i}\big(b_{ij}^{\alpha\beta}(y)\big)
&=\frac{\partial}{\partial y_i}\big(a_{ij}^{\alpha\beta}(y)\big)
+\frac{\partial}{\partial y_i}\left(a_{ik}^{\alpha\gamma}(y)\frac{\partial \chi_{j}^{\gamma\beta}}{\partial y_k}\right)\\
&=\frac{\partial}{\partial y_i}\big(a_{ij}^{\alpha\beta}(y)\big)-\frac{\partial}{\partial y_i}\left(a_{ik}^{\alpha\gamma}(y)\frac{\partial P_{j}^{\gamma\beta}}{\partial y_k}\right)+\frac{\partial}{\partial y_\alpha}(\pi_j^\beta) \\
&=\frac{\partial}{\partial y_\alpha}(\pi_j^\beta).
\endaligned
\end{equation}

\begin{lemma}\label{lemma3.1}
There exist $\varPhi_{kij}^{\alpha\beta} \in H_{\text{\rm per}}^1(Y)$ and $q_{ij}^\beta\in H_{\text{\rm per}}^1(Y)$ such that
\begin{equation}\label{elemma3.1}
b_{ij}^{\alpha\beta}=\frac{\partial}{\partial y_k}(\Phi_{kij}^{\alpha\beta})+\frac{\partial}{\partial y_\alpha}(q_{ij}^\beta) \quad \text{\rm and }\quad \Phi_{kij}^{\alpha\beta}=-\Phi_{ikj}^{\alpha\beta}.
\end{equation}
Moreover,
\begin{equation}\label{elemma3.1.a}
\|\Phi_{kij}^{\alpha\beta}\|_{L^2(Y)}+\|q_{ij}^{\beta}\|_{L^2(Y)} \le C,
\end{equation}
where $C$ depends only on $d$ and $\mu$.
\end{lemma}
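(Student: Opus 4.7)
The plan is to mimic the classical flux-corrector construction from scalar elliptic homogenization, but with a ``pressure-type'' term introduced up front to compensate for the obstruction (\ref{b1}). In the scalar case one would simply solve $\Delta f=b$ on the torus and take an antisymmetric combination of first derivatives of $f$, but that argument requires $\partial b_{ij}^{\alpha\beta}/\partial y_i=0$, which fails here: formula (\ref{b1}) says the divergence in $i$ equals $\partial \pi_j^\beta/\partial y_\alpha$. The role of $q_{ij}^\beta$ is exactly to absorb this extra term.

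The first step is to construct $q$. Since $\pi_j^\beta \in L^2_{\text{per}}(Y)$ and $\int_Y \pi_j^\beta = 0$, I solve the periodic Poisson problem $\Delta Q_j^\beta=\pi_j^\beta$ on $Y$ with $\int_Y Q_j^\beta=0$, yielding $Q_j^\beta \in H^2_{\text{per}}(Y)$ with $\|Q_j^\beta\|_{H^2(Y)}\le C$. Setting $q_{ij}^\beta=\partial Q_j^\beta/\partial y_i$ gives a periodic $H^1$ function whose $i$-divergence equals $\pi_j^\beta$, and obviously $\|q_{ij}^\beta\|_{L^2(Y)}\le C$. Writing $\widetilde b_{ij}^{\alpha\beta}:=b_{ij}^{\alpha\beta}-\partial q_{ij}^\beta/\partial y_\alpha$, identity (\ref{b1}) combined with this choice yields $\partial \widetilde b_{ij}^{\alpha\beta}/\partial y_i=0$, while $\widetilde b_{ij}^{\alpha\beta}$ remains $1$-periodic and of mean zero over $Y$.

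With $\widetilde b$ now divergence-free in its first lower index, I run the standard construction. Solving $\Delta f_{ij}^{\alpha\beta}=\widetilde b_{ij}^{\alpha\beta}$ on the torus with $\int_Y f_{ij}^{\alpha\beta}=0$ produces $f_{ij}^{\alpha\beta} \in H^2_{\text{per}}(Y)$ with $\|f_{ij}^{\alpha\beta}\|_{H^2(Y)}\le C$ by elliptic regularity. I then set
\[
\Phi_{kij}^{\alpha\beta}=\frac{\partial f_{ij}^{\alpha\beta}}{\partial y_k}-\frac{\partial f_{kj}^{\alpha\beta}}{\partial y_i}.
\]
Antisymmetry in $(k,i)$ and the $L^2$ bound $\|\Phi_{kij}^{\alpha\beta}\|_{L^2(Y)}\le C$ are then immediate. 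To verify the divergence identity, I compute
\[
\frac{\partial \Phi_{kij}^{\alpha\beta}}{\partial y_k}=\Delta f_{ij}^{\alpha\beta}-\frac{\partial}{\partial y_i}\Big(\frac{\partial f_{kj}^{\alpha\beta}}{\partial y_k}\Big)=\widetilde b_{ij}^{\alpha\beta}-\frac{\partial}{\partial y_i}\Big(\frac{\partial f_{kj}^{\alpha\beta}}{\partial y_k}\Big),
\]
and observe that $\partial f_{kj}^{\alpha\beta}/\partial y_k$ is periodic and harmonic, since $\Delta(\partial f_{kj}^{\alpha\beta}/\partial y_k)=\partial \widetilde b_{kj}^{\alpha\beta}/\partial y_k=0$; hence it is constant on $Y$ and its $y_i$-derivative vanishes. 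Adding $\partial q_{ij}^\beta/\partial y_\alpha$ back in recovers exactly (\ref{elemma3.1}).

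The main obstacle is conceptual rather than technical: recognizing that the pressure-generated term $\partial \pi_j^\beta/\partial y_\alpha$ on the right of (\ref{b1}) must first be stripped off by a scalar potential $q_{ij}^\beta$ before the classical antisymmetric flux corrector can be built. Once $q$ is in place, everything reduces to standard periodic Poisson estimates on the unit torus, and both $\|q\|_{L^2(Y)}$ and $\|\Phi\|_{L^2(Y)}$ are controlled by $\|\pi\|_{L^2(Y)}+\|\chi\|_{H^1(Y)}+\|A\|_{L^\infty}$, which are bounded in terms of $d$ and $\mu$.
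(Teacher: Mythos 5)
Your proof is correct, but it follows a genuinely different construction from the paper's. The paper builds $(f_{ij}^{\beta},q_{ij}^{\beta})$ in one stroke by solving the periodic Stokes cell problem $\Delta f_{ij}^{\beta}+\nabla q_{ij}^{\beta}=b_{ij}^{\beta}$, $\operatorname{div}(f_{ij}^{\beta})=0$, $\int_Y f_{ij}^{\beta}=0$, and then sets $\Phi_{kij}^{\alpha\beta}=\partial_k f_{ij}^{\alpha\beta}-\partial_i f_{kj}^{\alpha\beta}$; there the crucial vanishing of $\partial_i\big(\partial_k f_{kj}^{\alpha\beta}\big)$ is obtained by showing, via (\ref{b1}) and the incompressibility constraint $\partial_\alpha f_{ij}^{\alpha\beta}=0$, that the vector $\big(\partial_i f_{ij}^{\alpha\beta}\big)_\alpha$ solves the divergence-free, gradient-forced system (\ref{e3.1}) and is therefore constant, and the identity $\pi_j^\beta=\partial_i q_{ij}^\beta$ is only deduced afterwards in Remark \ref{remark3.1}. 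You instead decouple the problem into two scalar periodic Poisson problems: first $q_{ij}^\beta=\partial_i\Delta^{-1}\pi_j^\beta$, which makes $\partial_i q_{ij}^\beta=\pi_j^\beta$ hold by construction and renders $\widetilde b_{ij}^{\alpha\beta}=b_{ij}^{\alpha\beta}-\partial_\alpha q_{ij}^\beta$ divergence-free in $i$ and mean-zero; then the classical componentwise flux-corrector construction, where the constancy of $\partial_k f_{kj}^{\alpha\beta}$ follows from the elementary fact that a periodic harmonic function is constant. All solvability and mean-zero conditions you invoke do hold ($\int_Y\pi_j^\beta=0$, $\int_Y b_{ij}^{\alpha\beta}=0$), and your bounds depend only on $\|b\|_{L^2(Y)}$ and $\|\pi\|_{L^2(Y)}$, hence only on $d$ and $\mu$, exactly as in (\ref{elemma3.1.a}). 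Moreover, since your $q$ satisfies (\ref{qpi}) by construction, the later uses of that relation (in Lemma \ref{lemma3.2} and (\ref{e3.6})) remain valid with your correctors. What the paper's Stokes formulation buys is that $q$ arises naturally as a cell pressure and $f$ is solenoidal; what your splitting buys is that only standard scalar Poisson theory on the torus is needed and the auxiliary constancy argument becomes immediate.
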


\begin{proof}
Fix $1\le i,j,\beta\le d$.
 There exist $f_{ij}^{\beta}=(f_{ij}^{\alpha\beta})\in H_{\text{per}}^2(Y;\mathbb{R}^d)$ and $q_{ij}^{\beta}\in H^1_{\text{per}}(Y)$ satisfying the following Stokes system,
\begin{equation}\label{f}
\left\{
\begin{aligned}
\Delta f_{ij}^{\beta} +\nabla q_{ij}^\beta &= b_{ij}^{\beta} &\quad\text{ in }Y,\\
\text{div} (f_{ij}^\beta) &=0 &\quad\text{ in }Y,\\
\int_Y f_{ij}^{\beta\ } dy& =0,
\end{aligned}
\right.
\end{equation}
where $b_{ij}^\beta =(b_{ij}^{\alpha\beta})$.
We now define
$$
\Phi_{kij}^{\alpha\beta}(y)=\frac{\partial}{\partial y_k}(f_{ij}^{\alpha\beta})-\frac{\partial}{\partial y_i}(f_{kj}^{\alpha\beta}).
$$
Clearly, $\Phi_{kij}^{\alpha\beta}\in H^1_{\text{per}}(Y)$ and $\Phi_{kij}^{\alpha\beta}=-\Phi_{ikj}^{\alpha\beta}$. 
Note that, by (\ref{b1}) and (\ref{f}), $\frac{\partial f_{ij}^{\alpha\beta}}{\partial y_i}
\in H^1_{\text{per}} (Y)$ satisfies
\begin{equation}\label{e3.1}
\left\{
\aligned
\Delta\left(\frac{\partial f_{ij}^{\alpha\beta}}{\partial y_i}\right)& 
=-\frac{\partial}{\partial y_\alpha}\left(\frac{\partial q_{ij}^{\beta}}{\partial y_i}\right)+\frac{\partial b_{ij}^{\alpha\beta}}{\partial y_i}
=\frac{\partial}{\partial y_\alpha}\left(\pi_j^\beta-\frac{\partial q_{ij}^\beta}{\partial y_i} \right),\\
\frac{\partial}{\partial y_\alpha}\left(\frac{\partial f_{ij}^{\alpha\beta}}{\partial y_i}\right)&=0.
\endaligned
\right.
\end{equation}
It follows by the energy estimates that $\frac{\partial f_{ij}^{\alpha\beta}}{\partial y_i}$ is  constant.
Hence,
$$
\frac{\partial}{\partial y_k}(\Phi_{kij}^{\alpha\beta})=\frac{\partial^2}{\partial y_k\partial y_k}(f_{ij}^{\alpha\beta})-\frac{\partial}{\partial y_i}\left(\frac{\partial}{\partial y_k}(f_{kj}^{\alpha\beta})\right)= b_{ij}^{\alpha\beta}-\frac{\partial}{\partial y_\alpha}(q_{ij}^{\beta}).
$$
Furthermore, since $\|\chi_j^\beta\|_{H^1(Y)}\le C$, then
\begin{equation*}
\|\Phi_{kij}^{\alpha\beta}\|_{L^2(Y)}+\|q_{ij}^{\beta}\|_{L^2(Y)} \le C\|b_{ij}^{\alpha\beta}\|_{L^2(Y)}\le C,
\end{equation*}
where $C$ depends only on $d$ and $\mu$.
This completes the proof.
\end{proof}
\begin{remark}\label{remark3.1}
Recall that $\pi_j^\beta$ and $q_{ij}^\beta$ are both 1-periodic. By (\ref{e3.1})
and the fact that  $\frac{\partial f_{ij}^{\alpha\beta}}{\partial y_i}$ is  constant,
we see that  $\pi_j^\beta$ and $\frac{\partial q_{ij}^\beta}{\partial y_i}$ differ only by a constant. 
Since $\int_Y \pi_j^\beta=0$, we obtain  the following relation,
\begin{equation}\label{qpi}
\pi_j^\beta=\frac{\partial q_{ij}^\beta}{\partial y_i}.
\end{equation}
\end{remark}

\begin{lemma}\label{lemma3.2} 
Let $\Omega$ be a bounded $C^{1,1}$ domain. Suppose that $A$ satisfies ellipticity condition (\ref{Ellipticity}) and
 periodicity condition (\ref{Periodicity}). Given $g\in H^1(\Omega)$ and $f\in H^{3/2}(\partial\Omega;\mathbb{R}^d)$ 
 satisfying the compatibility condition (\ref{Compatibility}),  for $F\in L^2(\Omega;\mathbb{R}^d)$, 
 let $(u_\varepsilon,p_\varepsilon)$, $(u_0,p_0)$ and $(w_\varepsilon,\tau_\varepsilon)$ 
 be weak solutions of Dirichlet problems (\ref{DirichletStokes}), (\ref{DirichletStokes0}) and (\ref{w}), respectively. Then,
\begin{equation}\label{elemma3.2}
\|u_\varepsilon-u_0-\varepsilon \chi^\varepsilon S_\varepsilon (\nabla \widetilde{u}_0)+w_\varepsilon\|_{H_0^1(\Omega)}
 \le C\varepsilon\|u_0\|_{H^2(\Omega)},
\end{equation}
where $C$ depends only on $d$, $\mu$, and $\Omega$.
\end{lemma}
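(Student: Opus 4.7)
The plan is to apply the Stokes energy estimate (\ref{energy}) of Theorem \ref{weak-solution-theorem} to a Stokes pair $(z_\varepsilon,\sigma_\varepsilon)$ built from the error appearing in (\ref{elemma3.2}). Concretely, I set
$$z_\varepsilon = u_\varepsilon - u_0 - \varepsilon\chi^\varepsilon S_\varepsilon(\nabla\widetilde u_0) + w_\varepsilon, \qquad \sigma_\varepsilon = p_\varepsilon - p_0 - \pi^\varepsilon S_\varepsilon(\nabla\widetilde u_0) + \tau_\varepsilon.$$
The boundary data in (\ref{DirichletStokes}), (\ref{DirichletStokes0}) and (\ref{w}) give $z_\varepsilon\in H^1_0(\Omega;\mathbb{R}^d)$, and the divergence equations in the same three systems give $\text{div}(z_\varepsilon) = 0$ in $\Omega$. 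So (\ref{energy}) reduces (\ref{elemma3.2}) to the bound
$$\|\mathcal{L}_\varepsilon(z_\varepsilon)+\nabla\sigma_\varepsilon\|_{H^{-1}(\Omega;\mathbb{R}^d)} \le C\varepsilon\|u_0\|_{H^2(\Omega)},$$
where for this purpose $\sigma_\varepsilon$ may be freely modified by an additive scalar function without affecting $z_\varepsilon$.

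Combining the Stokes systems for $(u_\varepsilon,p_\varepsilon)$, $(u_0,p_0)$ and $(w_\varepsilon,\tau_\varepsilon)$ cancels the body force $F$ and eliminates the $(w_\varepsilon,\tau_\varepsilon)$ contribution, leaving
$$\mathcal{L}_\varepsilon(z_\varepsilon)+\nabla\sigma_\varepsilon = \mathcal{L}_0(u_0) - \mathcal{L}_\varepsilon(u_0) - \mathcal{L}_\varepsilon\big(\varepsilon\chi^\varepsilon S_\varepsilon(\nabla\widetilde u_0)\big) - \nabla\big[\pi^\varepsilon S_\varepsilon(\nabla\widetilde u_0)\big].$$
Splitting $\partial_j u_0^\beta = S_\varepsilon(\partial_j\widetilde u_0^\beta) + (\partial_j u_0^\beta - S_\varepsilon\partial_j\widetilde u_0^\beta)$ and expanding the $\mathcal{L}_\varepsilon(\varepsilon\chi^\varepsilon S_\varepsilon\nabla\widetilde u_0)$ term by the product rule, the definition (\ref{b}) of $b_{ij}^{\alpha\beta}$ lets me put the $\alpha$-component of the right-hand side in the form
$$\partial_i\Big[b_{ij}^{\alpha\beta}(x/\varepsilon) S_\varepsilon(\partial_j\widetilde u_0^\beta)\Big] - \partial_\alpha\Big[\pi_j^\beta(x/\varepsilon) S_\varepsilon(\partial_j\widetilde u_0^\beta)\Big] + R_\varepsilon^\alpha,$$
where $R_\varepsilon^\alpha = \partial_i[(a_{ij}^{\alpha\beta}(x/\varepsilon) - \widehat a_{ij}^{\alpha\beta})(\partial_j u_0^\beta - S_\varepsilon\partial_j\widetilde u_0^\beta)] + \varepsilon\partial_i[a_{ij}^{\alpha\gamma}(x/\varepsilon)\chi_k^{\gamma\beta}(x/\varepsilon) S_\varepsilon(\partial_j\partial_k\widetilde u_0^\beta)]$. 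Both pieces of $R_\varepsilon$ have $H^{-1}$ norm bounded by $C\varepsilon\|u_0\|_{H^2(\Omega)}$: the first by Proposition \ref{prop2.1} applied to $\nabla\widetilde u_0$, the second by Proposition \ref{prop2.2} together with $\|\chi_j^\beta\|_{H^1(Y)}\le C$ and the extension bound (\ref{extension}).

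The heart of the argument is then to control the remaining two terms in $H^{-1}$ by $C\varepsilon\|u_0\|_{H^2(\Omega)}$. Using Lemma \ref{lemma3.1} to write $b_{ij}^{\alpha\beta}(y) = \partial_{y_k}\Phi_{kij}^{\alpha\beta} + \partial_{y_\alpha}q_{ij}^\beta$ and the chain rule $(\partial_{y_m}f)(x/\varepsilon) = \varepsilon\partial_{x_m}[f(x/\varepsilon)]$, a single integration by parts gives
$$b_{ij}^{\alpha\beta}(x/\varepsilon) S_\varepsilon(\partial_j\widetilde u_0^\beta) = \varepsilon\partial_k\big[\Phi_{kij}^{\alpha\beta}(x/\varepsilon) S_\varepsilon(\partial_j\widetilde u_0^\beta)\big] + \varepsilon\partial_\alpha\big[q_{ij}^\beta(x/\varepsilon) S_\varepsilon(\partial_j\widetilde u_0^\beta)\big] - \varepsilon\Phi_{kij}^{\alpha\beta}(x/\varepsilon)\partial_k S_\varepsilon(\partial_j\widetilde u_0^\beta) - \varepsilon q_{ij}^\beta(x/\varepsilon)\partial_\alpha S_\varepsilon(\partial_j\widetilde u_0^\beta).$$
Now apply $\partial_i$ to both sides. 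Two cancellations drive the estimate: first, the antisymmetry $\Phi_{kij}^{\alpha\beta} = -\Phi_{ikj}^{\alpha\beta}$ together with the symmetry of $\partial_i\partial_k$ makes $\varepsilon\partial_i\partial_k[\Phi_{kij}^{\alpha\beta}(x/\varepsilon) S_\varepsilon(\partial_j\widetilde u_0^\beta)]$ vanish identically; second, the relation $\pi_j^\beta = \partial_{y_i}q_{ij}^\beta$ of Remark \ref{remark3.1} makes $\varepsilon\partial_i\partial_\alpha[q_{ij}^\beta(x/\varepsilon) S_\varepsilon(\partial_j\widetilde u_0^\beta)]$ equal $\partial_\alpha[\pi_j^\beta(x/\varepsilon) S_\varepsilon(\partial_j\widetilde u_0^\beta)]$ plus a pure gradient of size $\varepsilon$, which is absorbed into $\sigma_\varepsilon$. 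What survives is a sum of divergence-form terms of the shape $\varepsilon\partial_i[(\text{bounded $1$-periodic})\cdot S_\varepsilon(\nabla^2\widetilde u_0)]$, whose $H^{-1}(\Omega)$ norm is at most $C\varepsilon\|\widetilde u_0\|_{H^2(\mathbb{R}^d)}\le C\varepsilon\|u_0\|_{H^2(\Omega)}$ by Proposition \ref{prop2.2} and (\ref{extension}).

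The main obstacle, distinguishing the Stokes setting from the divergence-form elliptic systems of \cite{Suslina1301,Suslina1302}, is the pressure term $\partial_\alpha[\pi^\varepsilon S_\varepsilon\nabla\widetilde u_0]$: on its own it is only $O(1)$ in $H^{-1}$. The dual correctors $(\Phi_{kij}^{\alpha\beta}, q_{ij}^\beta)$ of Lemma \ref{lemma3.1}, coupled with the compatibility relation (\ref{qpi}), are engineered precisely so that the $\partial_{y_\alpha}q_{ij}^\beta$ summand of $b_{ij}^{\alpha\beta}$ absorbs this dangerous term upon differentiation, revealing its true $O(\varepsilon)$ size.
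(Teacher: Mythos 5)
Your proposal is correct and follows essentially the same route as the paper's proof: the same error function $z_\varepsilon$, the same use of the dual correctors $(\Phi_{kij}^{\alpha\beta},q_{ij}^\beta)$ of Lemma \ref{lemma3.1} with the antisymmetry in $(i,k)$ and the relation (\ref{qpi}) to cancel the pressure term $\partial_\alpha[\pi^\varepsilon S_\varepsilon\nabla\widetilde u_0]$ (absorbing the leftover $\varepsilon$-gradient into the pressure), followed by the energy estimate (\ref{energy}) together with Propositions \ref{prop2.1}--\ref{prop2.2} and (\ref{extension}). The only harmless slip is describing the surviving periodic weights as ``bounded''---$\chi$, $\Phi$, $q$ are merely in $L^2(Y)$---but Proposition \ref{prop2.2}, which you invoke, is exactly what handles such weights, so the argument stands as written.
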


\begin{proof}
Let
$$
z_\varepsilon=u_\varepsilon-u_0-\varepsilon \chi^\varepsilon S_\varepsilon(\nabla \widetilde{u}_0)+w_\varepsilon.
$$
Then
$$ 
\text{div}(z_\varepsilon)=0 \quad\text{ in } \Omega \quad\text{ and }\quad z_\varepsilon=0 \quad\text{ on } \partial\Omega.
$$
Now we compute $\mathcal{L}_\varepsilon(z_\varepsilon)$,
$$
\begin{aligned}
&(\mathcal{L}_\varepsilon(z_\varepsilon))^\alpha
= -\frac{\partial [p_\varepsilon-p_0+\tau_\varepsilon]}{\partial x_\alpha}
-\frac{\partial}{\partial x_i}\left(\Big[\widehat{a}_{ij}^{\alpha\beta}-a_{ij}^{\alpha\beta}(x/\varepsilon)\Big]\frac{\partial u_0^\beta}{\partial x_j}\right)\\
&\quad+\frac{\partial}{\partial x_i}\left(a_{ik}^{\alpha\gamma}(x/\varepsilon)\frac{\partial}{\partial x_k}
\Big[\varepsilon \chi_j^{\gamma\beta}(x/\varepsilon)\Big]S_\varepsilon\frac{\partial \widetilde{u}_0^\beta}{\partial x_j}\right) + \varepsilon\frac{\partial}{\partial x_i}\left(a_{ik}^{\alpha\gamma}(x/\varepsilon)\chi_j^{\gamma\beta}(x/\varepsilon)S_\varepsilon \frac{\partial^2 \widetilde{u}_0^\beta}{\partial x_k \partial x_j}\right)\\
&= -\frac{\partial [p_\varepsilon-p_0+\tau_\varepsilon]}{\partial x_\alpha}- \frac{\partial}{\partial x_i}\left(\Big[\widehat{a}_{ij}^{\alpha\beta}-a_{ij}^{\alpha\beta}(x/\varepsilon)\Big]\bigg[\frac{\partial u_0^\beta}{\partial x_j}-S_\varepsilon\frac{\partial \widetilde{u}_0^\beta}{\partial x_j}\bigg]\right)\\
&\quad +\frac{\partial}{\partial x_i}\left(b_{ij}^{\alpha\beta}(x/\varepsilon)S_\varepsilon\frac{\partial \widetilde{u}_0^\beta}{\partial x_j}\right)+\varepsilon\frac{\partial}{\partial x_i}\left(a_{ik}^{\alpha\gamma}(x/\varepsilon)\chi_j^{\gamma\beta}(x/\varepsilon)S_\varepsilon \frac{\partial^2 \widetilde{u}_0^\beta}{\partial x_k \partial x_j}\right).\\
\end{aligned}
$$
Using Lemma \ref{lemma3.1}, we may write
\begin{equation}\label{e3.2}
\aligned
\frac{\partial}{\partial x_i}\left(b_{ij}^{\alpha\beta}(x/\varepsilon)S_\varepsilon\frac{\partial \tilde{u}_0^\beta}{\partial x_j}\right)&=\frac{\partial}{\partial x_i}\left(\bigg[\frac{\partial}{\partial x_k}\Big(\varepsilon \Phi_{kij}^{\alpha\beta}(x/\varepsilon)\Big)+\frac{\partial}{\partial x_\alpha}\Big(\varepsilon q_{ij}^\beta(x/\varepsilon)\Big)\bigg]S_\varepsilon\frac{\partial \widetilde{u}_0^\beta}{\partial x_j}\right)\\
&= I_1+I_2.\\
\endaligned
\end{equation}
Since $\Phi_{kij}^{\alpha\beta}=-\Phi_{ikj}^{\alpha\beta}$, we see that
$$
\begin{aligned}
I_1&=\frac{\partial^2}{\partial x_i \partial x_k}\left(\varepsilon \Phi_{kij}^{\alpha\beta}(x/\varepsilon)S_\varepsilon\frac{\partial 
\widetilde{u_0}^\beta}{\partial x_j}\right)-\varepsilon\frac{\partial}{\partial x_i}\left(\Phi_{kij}^{\alpha\beta}(x/\varepsilon)S_\varepsilon\frac{\partial^2 \widetilde{u}_0^\beta}{\partial x_j\partial x_k}\right)\\
&=-\varepsilon\frac{\partial}{\partial x_i}\left(\Phi_{kij}^{\alpha\beta}(x/\varepsilon)S_\varepsilon\frac{\partial^2 \widetilde{u}_0^\beta}{\partial x_j\partial x_k}\right).
\end{aligned}
$$
For the second term in the RHS of (\ref{e3.2}), we have
\begin{equation}\label{e3.5}
\begin{aligned}
I_2&= \frac{\partial}{\partial x_\alpha}\left(\frac{\partial}{\partial x_i}\bigg[\varepsilon q_{ij}^\beta(x/\varepsilon)S_\varepsilon\frac{\partial \widetilde{u}_0^\beta}{\partial x_j}\bigg]\right)-\frac{\partial}{\partial x_i}\left(\varepsilon q_{ij}^\beta(x/\varepsilon)S_\varepsilon\frac{\partial^2 \widetilde{u}_0^\beta}{\partial x_\alpha \partial x_j}\right)\\
&=I_3-\frac{\partial}{\partial x_i}\left(\varepsilon q_{ij}^\beta(x/\varepsilon)S_\varepsilon\frac{\partial^2 \widetilde{u}_0^\beta}{\partial x_\alpha \partial x_j}\right).
\end{aligned}
\end{equation}
In view of (\ref{qpi}), for the first term on the RHS of (\ref{e3.5}), we obtain 
\begin{equation}
 I_3 
 = \frac{\partial}{\partial x_\alpha}\left(\pi_j^\beta(x/\varepsilon) S_\varepsilon\frac{\partial \widetilde{u}_0^\beta}{\partial x_j}\right)+\frac{\partial}{\partial x_\alpha}\left(\varepsilon q_{ij}^\beta(x/\varepsilon)S_\varepsilon\frac{\partial^2 \widetilde{u}_0^\beta}{\partial x_j \partial x_i}\right).
\end{equation}

Putting altogether, we have shown that
\begin{equation}\label{e3.6}
\begin{aligned}
&\left(\mathcal{L}_\varepsilon(z_\varepsilon)\right)^\alpha+\frac{\partial}{\partial x_\alpha}\left(p_\varepsilon-p_0-\pi_j^\beta(x/\varepsilon) S_\varepsilon\frac{\partial \widetilde{u}_0^\beta}{\partial x_j}-\varepsilon q_{ij}^\beta(x/\varepsilon)S_\varepsilon\frac{\partial^2 \widetilde{u}_0^\beta}{\partial x_j \partial x_i}+\tau_\varepsilon\right)\\
&=\varepsilon\frac{\partial}{\partial x_i}\left(\Big[a_{ij}^{\alpha\gamma}(x/\varepsilon)\chi_k^{\gamma\beta}(x/\varepsilon)-\Phi_{kij}^{\alpha\beta}(x/\varepsilon)\Big]S_\varepsilon\frac{\partial^2 \widetilde{u}_0^\beta}{\partial x_j \partial x_k}\right)\\
&\qquad \qquad 
-\varepsilon\frac{\partial}{\partial x_i}\left(q_{ij}^\beta(x/\varepsilon)S_\varepsilon\frac{\partial^2 \widetilde{u}_0^\beta}{\partial x_\alpha \partial x_j}\right)\\ 
&\qquad\qquad
-\frac{\partial}{\partial x_i}\left(\Big[\widehat{a}_{ij}^{\alpha\beta}-a_{ij}^{\alpha\beta}(x/\varepsilon)\Big]\bigg[\frac{\partial u_0^\beta}{\partial x_j}-S_\varepsilon\frac{\partial \widetilde{u}_0^\beta}{\partial x_j}\bigg]\right).\\
\end{aligned}
\end{equation}
Since  $z_\varepsilon \in H^1_0(\Omega; \mathbb{R}^d)$ and $\text{div} (z_\varepsilon)=0$ in $\Omega$, 
it follows from (\ref{e3.6}) by the energy estimate (\ref{energy}) that
$$
\aligned
c \int_\Omega |\nabla z_\varepsilon|^2 dx
&\le \varepsilon^2\int_\Omega \Big|\big[ |\chi(x/\varepsilon)|+|\Phi (x/\varepsilon)|\big]S_\varepsilon (\nabla^2 \widetilde{u}_0)\Big|^2\, dx\\
&\quad +\varepsilon^2\int_\Omega \Big| q (x/\varepsilon)S_\varepsilon (\nabla^2 \widetilde{u}_0)\Big|^2\, dx
+ \int_\Omega \Big|\nabla u_0-S_\varepsilon (\nabla  \widetilde{u}_0) \Big|^2\, dx.
\endaligned
$$
Now  we apply Propositions \ref{prop2.1}-\ref{prop2.2} as well as (\ref{extension}). This gives
$$
\begin{aligned}
\|\nabla z_\varepsilon\|_{L^2(\Omega)} 
&\le  C\varepsilon\left(\|\chi\|_{L^2(Y)}+\|\Phi\|_{L^2(Y)}+\|q\|_{L^2(Y)}+1\right)\|\nabla^2\widetilde{u}_0\|_{L^2(\mathbb{R}^d)}\\
&\le  C\varepsilon\|\nabla^2\widetilde{u}_0\|_{L^2(\mathbb{R}^d)} \\
&\le C\varepsilon\|u_0\|_{H^2(\Omega)},
\end{aligned}
$$
where $C$ depends only on $d$, $\mu$ and $\Omega$. 
Hence we have proved the desired result,
$
\|z_\varepsilon\|_{H_0^1(\Omega)} \le C\varepsilon\|u_0\|_{H^2(\Omega)},
$
and completed the proof.
\end{proof}

For $r>0$, let
 $$
 \aligned
 (\partial\Omega)_r &=\{x\in\mathbb{R}^d: \text{dist}(x,\partial\Omega)\le r\},\\
 \Omega_r & =\{x\in\Omega: \text{dist}(x,\partial\Omega)\le r\}.
 \endaligned
 $$
  We choose two cut-off functions $\theta_\varepsilon(x)$ and $\widetilde{\theta}_\varepsilon(x)$ in $\mathbb{R}^d$ 
  satisfying  the following conditions,
\begin{equation}\label{cutoff1}
\begin{aligned}
& \theta_\varepsilon\in C_0^\infty(\mathbb{R}^d), \quad \text{supp} (\theta_\varepsilon) \subset (\partial \Omega)_\varepsilon,\quad 0\le \theta_\varepsilon(x)\le 1,\\
& \theta_\varepsilon|_{\partial\Omega}=1, \quad |\nabla \theta_\varepsilon|\le \kappa/\varepsilon,
\end{aligned}
\end{equation}
and
\begin{equation}\label{cutoff2}
\begin{aligned}
& \widetilde{\theta}_\varepsilon\in C_0^\infty(\mathbb{R}^d), \quad \text{supp} (\widetilde{\theta}_\varepsilon) \subset (\partial \Omega)_{2\varepsilon},\quad 0\le \widetilde{\theta}_\varepsilon(x)\le 1,\\
& \widetilde{\theta}_\varepsilon(x)=1 \text{ for } x\in (\partial\Omega)_\varepsilon, 
\quad |\nabla \widetilde{\theta}_\varepsilon |\le \widetilde{\kappa}/\varepsilon.
\end{aligned}
\end{equation}

The following  is an estimate for integrals near the boundary, see \cite{Suslina1302} for example.

\begin{lemma}\label{lemma4.0}
Let $\Omega\subset \mathbb{R}^d$ be a bounded $C^1$ domain.
 Then, for any function $u\in H^1(\Omega)$,
$$
\int_{\Omega_r} |u|^2 dx \le C r \|u\|_{H^1(\Omega)}\|u\|_{L^2(\Omega)}.
$$
Moreover, for any 1-periodic function $f \in L^2(Y)$ and $u\in H^1(\mathbb{R}^d)$,
$$
\int_{(\partial\Omega)_{2\varepsilon}}|f^\varepsilon|^2|S_\varepsilon u|^2 dx \le C\varepsilon\|f\|_{L^2(Y)}
\|u\|_{H^1(\mathbb{R}^d)}\|u\|_{L^2(\mathbb{R}^d)},
$$
where $C$ depends only on $\Omega$.
\end{lemma}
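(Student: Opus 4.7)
The plan is to prove the two inequalities in turn, bootstrapping the second from the first together with Proposition~\ref{prop2.2}.

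For the first inequality, I would foliate $\Omega_r$ (for $r$ less than the reach of $\partial\Omega$; larger $r$ are handled by enlarging the constant) by the $C^1$ level sets $\Gamma_t=\{\operatorname{dist}(\cdot,\partial\Omega)=t\}\cap\Omega$. The coarea formula, using $|\nabla d|=1$ a.e.\ on the tube, yields
$$
\int_{\Omega_r}|u|^2\,dx=\int_0^r\int_{\Gamma_t}|u|^2\,dS\,dt.
$$
On each $\Gamma_t$ I would then apply the refined trace inequality
$\|v\|_{L^2(\Gamma_t)}^2\le C\,\|v\|_{L^2(\Omega)}\|v\|_{H^1(\Omega)}$, whose standard proof is a divergence-theorem application with a $C^1$ vector field $\psi$ satisfying $\psi\cdot\nu_{\Gamma_t}=1$ on $\Gamma_t$ and $\|\psi\|_{C^1}\le C$; the $C^1$ regularity of $\partial\Omega$ makes $C$ uniform in $t\in(0,r)$. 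Integrating in $t$ delivers the claimed bound.

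For the second inequality, the key observation is that for $x\in(\partial\Omega)_{2\varepsilon}$ and $z\in Y$ one has $|x-\varepsilon z-x|\le \varepsilon\sqrt{d}$, so $S_\varepsilon u(x)$ depends only on the values of $u$ in the slightly enlarged tube $(\partial\Omega)_{C_0\varepsilon}$ with $C_0=2+\sqrt{d}$. Hence
$$
\int_{(\partial\Omega)_{2\varepsilon}}|f^\varepsilon|^2|S_\varepsilon u|^2\,dx
=\int_{(\partial\Omega)_{2\varepsilon}}|f^\varepsilon|^2\bigl|S_\varepsilon\bigl(u\,\mathbf{1}_{(\partial\Omega)_{C_0\varepsilon}}\bigr)\bigr|^2\,dx,
$$
and Proposition~\ref{prop2.2} bounds the right-hand side by $\|f\|_{L^2(Y)}^2\int_{(\partial\Omega)_{C_0\varepsilon}}|u|^2\,dx$. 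Applying the first inequality separately to $u|_\Omega\in H^1(\Omega)$ and $u|_{\mathbb{R}^d\setminus\overline{\Omega}}\in H^1(\mathbb{R}^d\setminus\overline{\Omega})$ on the two sides of $\partial\Omega$ then produces the desired $O(\varepsilon)$ factor.

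I expect the main technical obstacle to be the uniformity of the refined trace inequality across the family $\{\Gamma_t\}_{t\in(0,r_0)}$; once this is granted (using the uniform bound on $\|\psi\|_{C^1}$ inherited from the $C^1$ structure of $\partial\Omega$), the remainder is straightforward bookkeeping with Fubini, Proposition~\ref{prop2.2}, and the localization argument above.
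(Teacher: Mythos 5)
Your second estimate is handled correctly and by the standard route: the observation that for $x\in(\partial\Omega)_{2\varepsilon}$ the average $S_\varepsilon u(x)$ only sees $u$ on $(\partial\Omega)_{(2+\sqrt d)\varepsilon}$, so one may replace $u$ by $u\,\mathbf{1}_{(\partial\Omega)_{C_0\varepsilon}}$, apply Proposition~\ref{prop2.2}, and then invoke the first estimate on both sides of $\partial\Omega$ (for the exterior side, intersect with a large ball $B_R\supset\overline\Omega$ so that the first estimate, stated for bounded domains, applies to $B_R\setminus\overline\Omega$). Note that this correctly produces $\|f\|_{L^2(Y)}^2$, which is the right homogeneity; the single power in the statement is a typo. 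The paper itself does not prove the lemma but refers to \cite{Suslina1302}, where the first estimate is obtained by flattening the boundary rather than by your foliation.

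The genuine gap is in your proof of the first estimate. For a domain that is only $C^1$, the reach of $\partial\Omega$ may be zero (think of boundary pieces like the graph of $|x'|^{3/2}$, whose medial axis touches the boundary), so the reduction ``for $r$ less than the reach, larger $r$ by enlarging the constant'' never gets started: there is no positive $r_0$ to fall back on. Moreover, the level sets $\Gamma_t=\{\mathrm{dist}(\cdot,\partial\Omega)=t\}$ are merely level sets of a Lipschitz function; they need not be $C^1$ hypersurfaces, so the refined trace inequality on $\Gamma_t$ cannot be proved by the classical divergence theorem with a field satisfying $\psi\cdot\nu_{\Gamma_t}=1$, and the uniformity in $t$ that you flag as the main obstacle is exactly what is left unjustified (one could patch this with sets of finite perimeter and a generalized Gauss--Green formula for a.e.\ $t$, but that is a substantively different argument). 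Your tubular argument is fine under the paper's standing $C^{1,1}$ hypothesis, where the distance function is $C^{1,1}$ near $\partial\Omega$, but not under the $C^1$ hypothesis of the lemma. The standard fix is a partition of unity and local graph coordinates: in a chart where $\Omega$ lies above a $C^1$ graph $x_d=\phi(x')$, the layer $\Omega_r$ is contained in $\{\phi(x')<x_d<\phi(x')+Cr\}$, and the one-dimensional inequality $|u(x',s)|^2\le |u(x',\tau)|^2+2\int |u|\,|\partial_d u|\,d\sigma$, averaged in $\tau$ over an interval of unit scale and integrated in $s$ over the layer of width $\sim r$, gives $\int_{\Omega_r}|u|^2\le Cr\big(\|u\|_{L^2(\Omega)}^2+\|u\|_{L^2(\Omega)}\|\nabla u\|_{L^2(\Omega)}\big)\le Cr\|u\|_{L^2(\Omega)}\|u\|_{H^1(\Omega)}$, with no regularity needed for the distance level sets.
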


We are now ready to give the proof of (\ref{ethm1.1a}).

\begin{proof}[\textbf{Proof of estimate (\ref{ethm1.1a})}]
By Lemma \ref{lemma3.2}, the problem has been reduced to estimating  $w_\varepsilon$ in $H^1$.
Notice that by the energy estimate (\ref{energy}),
\begin{equation}\label{estimate-w}
\aligned
&\|w_\varepsilon\|_{H^1(\Omega)} 
\le C\varepsilon\|\chi^\varepsilon S_\varepsilon \nabla \widetilde{u}_0\|_{H^{1/2}(\partial \Omega)} 
+ C \varepsilon\| \text{div} \big(\chi^\varepsilon S_\varepsilon\nabla \widetilde{u}_0) \|_{L^2(\Omega)} \\
&\le C\varepsilon\|\theta_\varepsilon \chi^\varepsilon S_\varepsilon \nabla \widetilde{u}_0\|_{H^1(\Omega)}
+C\varepsilon \| \chi^\varepsilon \nabla S_\varepsilon  (\nabla \widetilde{u}_0)\|_{L^2(\Omega)}\\
&\le C \varepsilon
\Big\{ \| \chi^\varepsilon S_\varepsilon (\nabla \widetilde{u}_0) \|_{L^2(\Omega)}
+\| (\nabla \theta_\varepsilon) \chi^\varepsilon S_\varepsilon (\nabla \widetilde{u}_0)\|_{L^2(\Omega)}\\
&\qquad \qquad +\varepsilon^{-1} \| \theta_\varepsilon (\nabla \chi)^\varepsilon S_\varepsilon (\nabla \widetilde{u}_0) \|_{L^2(\Omega)}
+\|  \chi^\varepsilon S_\varepsilon (\nabla^2 \widetilde{u}_0) \|_{L^2(\Omega)} \Big\}\\
&\le C \varepsilon \Big\{ \| \widetilde{u}_0\|_{H^2(\mathbb{R}^d)}
+\varepsilon^{-1} \|  \chi^\varepsilon S_\varepsilon (\nabla \widetilde{u}_0) \|_{L^2(\Omega_\varepsilon)}
+\varepsilon^{-1} \|  (\nabla \chi)^\varepsilon S_\varepsilon (\nabla \widetilde{u}_0) \|_{L^2(\Omega_\varepsilon)}\Big\}\\
&\le C \varepsilon^{1/2}  \| \widetilde{u}_0\|_{H^2(\mathbb{R}^d)},
\endaligned
\end{equation}
where we have used Proposition \ref{prop2.2} for the fourth inequality and Lemma \ref{lemma4.0}
for the last.
We point out that the fact $\text{div} (\chi)=0$ in $\mathbb{R}^d$ is also used for the second inequality in (\ref{estimate-w}).
Therefore,
$$
\aligned
\|u_\varepsilon-u_0-\varepsilon\chi^\varepsilon S_\varepsilon\left(\nabla \widetilde{u}_0\right)\|_{H^1(\Omega)} 
&\le \|z_\varepsilon\|_{H^1(\Omega)}+\|w_\varepsilon\|_{H^1(\Omega)}\\
& \le C\sqrt{\varepsilon}\|u_0\|_{H^2(\Omega)},
\endaligned
$$
where $C$ depends only on $d$, $\mu$, and $\Omega$.
This completes the proof.
\end{proof}



\section{Convergence rates for the pressure term}
\setcounter{equation}{0}

To prove estimate(\ref{ethm1.1b}), we first recall that if $(u_\varepsilon,p_\varepsilon)\in H^1(\Omega;\mathbb{R}^d)\times L^2(\Omega)$ is a weak solution of the Stokes system (\ref{DirichletStokes}), then 
\begin{equation}\label{ep}
\|p_\varepsilon-\average_\Omega p_\varepsilon\|_{L^2(\Omega)}\le C \|\nabla p_\varepsilon\|_{H^{-1}(\Omega)}\le 
C \Big\{ \|F\|_{H^{-1}(\Omega)}+ \|u_\varepsilon\|_{H^1(\Omega)}\Big\},
\end{equation}
where $C$ depends only on $d$, $\mu$, and $\Omega$
(see e.g. \cite{Temam77}).

\begin{proof}[\textbf{Proof of estimate (\ref{ethm1.1b})}]
Since $\int_\Omega p_\varepsilon=\int_\Omega p_0=0$, using (\ref{ep}) and (\ref{e3.6}), we see that
\begin{equation}\label{e3.9}
\begin{aligned}
&\|p_\varepsilon-p_0-\Big[\big(\pi^\varepsilon S_\varepsilon\nabla\widetilde{u}_0 +\varepsilon q^\varepsilon S_\varepsilon\nabla^2\widetilde{u}_0-\tau_\varepsilon\big)-\average_\Omega \big(\pi^\varepsilon S_\varepsilon\nabla\widetilde{u}_0 +\varepsilon q^\varepsilon S_\varepsilon\nabla^2\widetilde{u}_0-\tau_\varepsilon\big)\Big]\|_{L^2(\Omega)}\\
&\le C\|\nabla \Big[p_\varepsilon-p_0-\pi^\varepsilon S_\varepsilon\left(\nabla\tilde{u}_0 \right)-\varepsilon q^\varepsilon S_\varepsilon\left(\nabla^2\tilde{u}_0\right)+\tau_\varepsilon\Big]\|_{H^{-1}(\Omega)}\\
&\le C\left\{\|\nabla z_\varepsilon\|_{L^2(\Omega)}+\varepsilon\left\| 
\big(|\chi^\varepsilon|+|\varPhi^\varepsilon|+|q^\varepsilon|\big)S_\varepsilon\left(\nabla^2\widetilde{u}_0\right)\right\|_{L^2(\Omega)}+\|S_\varepsilon\left(\nabla\widetilde{u}_0\right)-\nabla u_0\|_{L^2(\Omega)}\right\}\\
&\le C\varepsilon\|u_0\|_{H^2(\Omega)},
\end{aligned}
\end{equation}
where the last inequality follows from the proof of Lemma \ref{lemma3.2}.
Note that  by Propostion $\ref{prop2.2}$ and (\ref{extension}), 
\begin{equation}\label{e3.10}
\varepsilon\|q^\varepsilon S_\varepsilon\nabla^2 \widetilde{u}_0
-\average_\Omega q^\varepsilon S_\varepsilon\nabla^2 \tilde{u}_0\|_{L^2(\Omega)} 
\le C\varepsilon\|\widetilde{u}_0\|_{H^2(\mathbb{R}^d)} \le C\varepsilon\|u_0\|_{H^2(\Omega)}.
\end{equation}
Also, by the definition of $(w_\varepsilon,\tau_\varepsilon)$ and (\ref{ep}),
\begin{equation}\label{e3.11}
\|\tau_\varepsilon-\average_\Omega \tau_\varepsilon\|_{L^2(\Omega)} \le C\|\nabla \tau_\varepsilon\|_{H^{-1}(\Omega)}\le C\|\nabla w_\varepsilon\|_{L^2(\Omega)}
\le C\sqrt{\varepsilon}\|u_0\|_{H^2(\Omega)},
\end{equation}
where the last inequality follows from (\ref{estimate-w}). By  combining (\ref{e3.9}), (\ref{e3.10}) and (\ref{e3.11}), we have proved
that
$$
\| p_\varepsilon-p_0-\Big[\pi^\varepsilon S_\varepsilon \left(\nabla\widetilde{u}_0\right)-\average_\Omega \pi^\varepsilon S_\varepsilon \left(\nabla\widetilde{u}_0\right)\Big]\|_{L^2(\Omega)}\le C\sqrt{\varepsilon}\|u_0\|_{H^2(\Omega)}.
$$
This completes the proof.
\end{proof}


\section{Convergence rates for $u_\varepsilon$ in $L^2$}
\setcounter{equation}{0}

To establish the sharp  $O(\varepsilon)$ rate for $u_\varepsilon$ in $L^2$, in view of (\ref{elemma3.2}),  we obtain 
$$
\|u_\varepsilon-u_0-\varepsilon \chi^\varepsilon S_\varepsilon \nabla \widetilde{u}_0+w_\varepsilon\|_{L^2(\Omega)}
 \le C\varepsilon\|u_0\|_{H^2(\Omega)}.
$$
Using Proposition \ref{prop2.2} and (\ref{extension}),
$$
\|\chi^\varepsilon S_\varepsilon\nabla \widetilde{u}_0\|_{L^2(\Omega)}\le C\|\chi\|_{L^2(Y)}\|\nabla \widetilde{u}_0\|_{L^2(\mathbb{R}^{d})}\le C\| u_0\|_{H^2(\Omega)}.
$$
Thus, 
\begin{equation}\label{e4.1}
\|u_\varepsilon-u_0\|_{L^2(\Omega)} \le C\varepsilon\|u_0\|_{H^2(\Omega)}+\|w_\varepsilon\|_{L^2(\Omega)},
\end{equation}
and it remains to estimate $\|w_\varepsilon\|_{L^2(\Omega)}$.

\begin{lemma}\label{lemma4.1}
Let $\Omega$ be a bounded $C^{1,1}$ domain. Suppose that $A$ satisfies ellipticity condition (\ref{Ellipticity})
and periodicity condition (\ref{Periodicity}). Given 
$g\in H^1(\Omega)$ and $f\in H^{3/2}(\partial\Omega;\mathbb{R}^d)$ satisfying the compatibility condition (\ref{Compatibility}), 
for $F\in L^2(\Omega;\mathbb{R}^d)$, let 
$(u_\varepsilon,p_\varepsilon)$, $(u_0,p_0)$ be weak solutions of the Dirichlet problems (\ref{DirichletStokes}), (\ref{DirichletStokes0}), respectively.
Then
\begin{equation}\label{e4.2}
\|u_\varepsilon-u_0-\varepsilon (1-\widetilde{\theta}_\varepsilon)\chi^\varepsilon S_\varepsilon \left(\nabla\widetilde{u}_0\right)\|_{H^1(\Omega)} \le C\sqrt{\varepsilon}\|u_0\|_{H^2(\Omega)},
\end{equation}
and
\begin{equation}\label{e4.3}
\|p_\varepsilon-p_0-\Big[(1-\widetilde{\theta}_\varepsilon)\pi^\varepsilon S_\varepsilon \left(\nabla\widetilde{u}_0\right)
-\average_\Omega \pi^\varepsilon 
S_\varepsilon \left(\nabla\widetilde{u}_0\right)\Big]\|_{L^2(\Omega)}\le C\sqrt{\varepsilon}\|u_0\|_{H^2(\Omega)},
\end{equation}
where $C$ depends only on $d$, $\mu$, and $\Omega$.
\end{lemma}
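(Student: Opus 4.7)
The plan is to derive both (\ref{e4.2}) and (\ref{e4.3}) as corollaries of Theorem \ref{theorem1.1}, whose estimates (\ref{ethm1.1a}) and (\ref{ethm1.1b}) have already been proved in Sections 3 and 4. Since the formulas in (\ref{e4.2}) and (\ref{e4.3}) differ from those in (\ref{ethm1.1a}) and (\ref{ethm1.1b}) respectively only by the cutoff terms $\varepsilon\widetilde\theta_\varepsilon\chi^\varepsilon S_\varepsilon(\nabla\widetilde u_0)$ and $\widetilde\theta_\varepsilon\pi^\varepsilon S_\varepsilon(\nabla\widetilde u_0)$ (note that the average $\fint_\Omega \pi^\varepsilon S_\varepsilon(\nabla\widetilde u_0)$ appears on both sides and cancels; for (\ref{e4.3}) we also normalize the pressures to have zero mean, which is admissible since the bounded expression is invariant under adding a common constant to $p_\varepsilon$ and $p_0$), it suffices by the triangle inequality to show
$$
\|\varepsilon\widetilde\theta_\varepsilon\chi^\varepsilon S_\varepsilon(\nabla\widetilde u_0)\|_{H^1(\Omega)}
+\|\widetilde\theta_\varepsilon\pi^\varepsilon S_\varepsilon(\nabla\widetilde u_0)\|_{L^2(\Omega)}
\le C\sqrt{\varepsilon}\|u_0\|_{H^2(\Omega)}.
$$

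The $L^2$ estimate on the pressure correction follows immediately from Lemma \ref{lemma4.0}: the term is supported in $(\partial\Omega)_{2\varepsilon}\cap\Omega$, and since $\pi\in L^2(Y)$, the second inequality of Lemma \ref{lemma4.0} applied with $f=\pi$ and $u=\nabla\widetilde u_0$ yields $\|\widetilde\theta_\varepsilon\pi^\varepsilon S_\varepsilon(\nabla\widetilde u_0)\|_{L^2(\Omega)}^2 \le C\varepsilon\|\pi\|_{L^2(Y)}\|\widetilde u_0\|_{H^2(\mathbb{R}^d)}\|\widetilde u_0\|_{H^1(\mathbb{R}^d)}\le C\varepsilon\|u_0\|_{H^2(\Omega)}^2$. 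The same argument applied to the velocity correction produces the stronger bound $O(\varepsilon^{3/2})\|u_0\|_{H^2(\Omega)}$ in $L^2$, so the work is really in the gradient term for (\ref{e4.2}).

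For the $H^1$ estimate, I would expand $\nabla\bigl(\varepsilon\widetilde\theta_\varepsilon\chi^\varepsilon S_\varepsilon(\nabla\widetilde u_0)\bigr)$ by the product rule into three pieces:
\begin{equation*}
\varepsilon(\nabla\widetilde\theta_\varepsilon)\chi^\varepsilon S_\varepsilon(\nabla\widetilde u_0)
+\widetilde\theta_\varepsilon(\nabla\chi)^\varepsilon S_\varepsilon(\nabla\widetilde u_0)
+\varepsilon\widetilde\theta_\varepsilon\chi^\varepsilon S_\varepsilon(\nabla^2\widetilde u_0).
\end{equation*}
The first piece satisfies $\varepsilon|\nabla\widetilde\theta_\varepsilon|\le C$ and is supported in $(\partial\Omega)_{2\varepsilon}\cap\Omega$, so Lemma \ref{lemma4.0} applied with $f=\chi$ gives the desired $O(\sqrt\varepsilon)\|u_0\|_{H^2}$ bound. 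The third piece is controlled directly by Proposition \ref{prop2.2} by $C\varepsilon\|\chi\|_{L^2(Y)}\|\widetilde u_0\|_{H^2(\mathbb{R}^d)}$, which is even better than needed.

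The main technical point is the middle term $\widetilde\theta_\varepsilon(\nabla\chi)^\varepsilon S_\varepsilon(\nabla\widetilde u_0)$, since $\nabla\chi$ lies only in $L^2(Y)$ and not in $L^\infty$, so no pointwise bound can be used. This is precisely the situation that Lemma \ref{lemma4.0} is designed for: applying its second inequality with $f=\nabla\chi$ and $u=\nabla\widetilde u_0$ yields $\|\widetilde\theta_\varepsilon(\nabla\chi)^\varepsilon S_\varepsilon(\nabla\widetilde u_0)\|_{L^2(\Omega)}^2\le C\varepsilon\|\nabla\chi\|_{L^2(Y)}\|u_0\|_{H^2(\Omega)}^2$, which gives the $O(\sqrt\varepsilon)$ rate. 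Summing the three contributions and combining with (\ref{ethm1.1a}) and (\ref{ethm1.1b}) via the triangle inequality completes the proof.
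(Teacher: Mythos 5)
Your proposal is correct and follows essentially the same route as the paper: both reduce (\ref{e4.2}) and (\ref{e4.3}) to Theorem \ref{theorem1.1} via the triangle inequality and then bound the cutoff corrections $\varepsilon\widetilde\theta_\varepsilon\chi^\varepsilon S_\varepsilon(\nabla\widetilde u_0)$ in $H^1$ and $\widetilde\theta_\varepsilon\pi^\varepsilon S_\varepsilon(\nabla\widetilde u_0)$ in $L^2$ by the same product-rule expansion, using Lemma \ref{lemma4.0} for the boundary-layer terms (including the $(\nabla\chi)^\varepsilon$ term) and Proposition \ref{prop2.2} for the $S_\varepsilon(\nabla^2\widetilde u_0)$ term. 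Your explicit remark on the pressure normalization only makes precise what the paper leaves implicit.
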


\begin{proof}
Note that
\begin{equation}\label{plemma4.1}
\aligned
\|\varepsilon \widetilde{\theta}_\varepsilon \chi^\varepsilon S_\varepsilon \nabla \widetilde{u}_0\|_{H^1(\Omega)} 
&\le C \varepsilon \| \chi^\varepsilon S_\varepsilon(\nabla \widetilde{u}_0)\|_{L^2(\Omega_{2\varepsilon})}
+C \varepsilon \| \chi^\varepsilon S_\varepsilon(\nabla^2 \widetilde{u}_0)\|_{L^2(\Omega_{2\varepsilon})}\\
&\qquad
+ C \| (|\chi^\varepsilon|+|(\nabla \chi)^\varepsilon |) S_\varepsilon (\nabla \widetilde{u}_0)\|_{L^2(\Omega_{2\varepsilon})}\\
&\le C\sqrt{\varepsilon}\|u_0\|_{H^2(\Omega)},
\endaligned
\end{equation}
where we have used Lemma \ref{lemma4.0} and Proposition (\ref{prop2.2}) for the last inequality.
This, together with estimate (\ref{ethm1.1a}), gives (\ref{e4.2}).

Similarly, using Lemma \ref{lemma4.0}, we see that
$$
\begin{aligned}
&\|\widetilde{\theta}_\varepsilon \pi^\varepsilon S_\varepsilon \nabla \widetilde{u}_0\|^2_{L^2(\Omega)} 
\le C\int_{(\partial\Omega)_{2\varepsilon}} |\pi^\varepsilon S_\varepsilon(\nabla \widetilde{u}_0)|^2
 \le C\varepsilon\|u_0\|^2_{H^2(\Omega)}.
\end{aligned}
$$
This, together with estimate (\ref{ethm1.1b}), gives (\ref{e4.3}).
\end{proof}

\begin{proof}[\textbf{Proof of Theorem \ref{theorem1.2}}]
In view of (\ref{e4.1}), it suffices to show that $$\| w_\varepsilon\|_{L^2(\Omega)} \le C \varepsilon \| u_0\|_{H^2(\Omega)}.$$
Furthermore, let
$$
\phi_\varepsilon= \varepsilon \theta_\varepsilon \chi^\varepsilon S_\varepsilon \nabla \widetilde{u}_0.
$$
Since $\|\phi_\varepsilon\|_{L^2(\Omega)}\le C\varepsilon\|u_0\|_{H^2(\Omega)}$, it is enough to show that
\begin{equation}\label{estimate-eta}
\|\eta_\varepsilon \|_{L^2(\Omega)} \le C \varepsilon \| u_0\|_{H^2(\Omega)},
\end{equation}
where
$
\eta_\varepsilon= w_\varepsilon -\phi_\varepsilon.
$

To this end, we first note that by the definition of $(w_\varepsilon,\tau_\varepsilon)$ in (\ref{w}),  
the functions $(\eta_\varepsilon, \tau_\varepsilon) \in H^1_0(\Omega;\mathbb{R}^d) \times L^2(\Omega)$ satisfy
\begin{equation}\label{e4.4}
\left\{
\begin{aligned}
\mathcal{L}_\varepsilon (\eta_\varepsilon) +\nabla \tau_\varepsilon&
=-\mathcal{L}_\varepsilon \phi_\varepsilon & \quad  & \text{ in } \Omega,\\
\text{div }\eta_\varepsilon&= \varepsilon\, \text{div }((1-\theta_\varepsilon)\chi^\varepsilon S_\varepsilon \nabla \widetilde{u}_0)&\quad & \text{ in } \Omega,\\
\eta_\varepsilon&=0 &\quad & \text{ on }\partial \Omega.
\end{aligned}
\right.
\end{equation}
Let $(\eta_0,\tau_0) \in H^1_0(\Omega;\mathbb{R}^d) \times L^2(\Omega)$  be a weak solution of the homogenized Dirichlet problem
\begin{equation}\label{e4.5}
\left\{
\begin{aligned}
\mathcal{L}_0 (\eta_0)+\nabla \tau_0&=-\mathcal{L}_\varepsilon \phi_\varepsilon  &\quad & \text{ in }\Omega,\\
\text{div }\eta_0&= \varepsilon\, \text{div }((1-\theta_\varepsilon)\chi^\varepsilon S_\varepsilon \nabla \widetilde{u}_0)&\quad
 & \text{ in }\Omega,\\
\eta_0&=0 &\quad & \text{ on }\partial \Omega.\\
\end{aligned}
\right.
\end{equation}
To estimate $\eta_\varepsilon -\eta_0$,
we consider the following duality problems. 
 For any $H\in L^2(\Omega;\mathbb{R}^d)$, 
let $(\rho_\varepsilon,\sigma_\varepsilon)\in H^1_0(\Omega;\mathbb{R}^d) \times L^2(\Omega)$ be the weak solution of
\begin{equation}\label{e4.6}
\left\{
\begin{aligned}
\mathcal{L}^*_\varepsilon (\rho_\varepsilon) + \nabla \sigma_\varepsilon&=H&\quad  & \text{ in }\Omega,\\
\text{div }\rho_\varepsilon&= 0 & \quad & \text{ in }\Omega,\\
\rho_\varepsilon&=0 & \quad & \text{ on }\partial \Omega,\\
\end{aligned}
\right.
\end{equation}
and $(\rho_0,\sigma_0)\in (H^2(\Omega;\mathbb{R}^d)\cap H^1_0(\Omega;\mathbb{R}^d)) \times H^1(\Omega)$  the weak solution of
\begin{equation}\label{e4.7}
\left\{
\begin{aligned}
\mathcal{L}^*_0 (\rho_0)+\nabla \sigma_0&=H  & \quad & \text{ in }\Omega,\\
\text{div }\rho_0&= 0 & \quad &\text{ in }\Omega,\\
\rho_0&=0 & \quad & \text{ on }\partial \Omega,\\
\end{aligned}
\right.
\end{equation}
with
$$
\int_\Omega \sigma_\varepsilon=\int_\Omega \sigma_0=0.
$$
Here we have used the notation: $\mathcal{L}^*_\varepsilon =-\text{div} \big(A^*(x/\varepsilon)\nabla\big)$
and $\mathcal{L}_0^* =-\text{div}\big(\widehat{A^*}\nabla \big)$.
We note that  Lemma \ref{lemma4.1} continues to hold for $\mathcal{L}^{*}_\varepsilon$, as 
$A^*$ satisfies the same conditions as $A$. 
Also, by the $W^{2,2}$ estimates (\ref{reg}) for Stokes systems with constant coefficients in $C^{1,1}$ domains,
$$
 \|\rho_0\|_{H^2(\Omega)} +\|\sigma_0\|_{H^1(\Omega)} \le C\, \| H\|_{L^2(\Omega)}.
$$
As a result, we have 
\begin{equation}\label{e4.8}
\|\rho_\varepsilon-\rho_0-\varepsilon (1-\widetilde{\theta}_\varepsilon)\chi^{*\varepsilon} 
S_\varepsilon\left(\nabla \widetilde{\rho}_0\right)\|_{H^1(\Omega)} 
 \le C\sqrt{\varepsilon}\|\rho_0\|_{H^2(\Omega)}\\
\le C\sqrt{\varepsilon}\|H\|_{L^2(\Omega)},
\end{equation}
and
\begin{equation}\label{e4.9}
\|\sigma_\varepsilon-\sigma_0-\Big[(1-\widetilde{\theta}_\varepsilon)\pi^{*\varepsilon} 
S_\varepsilon \left(\nabla\widetilde{\rho}_0\right)-\average_\Omega \pi^{*\varepsilon} 
S_\varepsilon \left(\nabla\widetilde{\rho}_0\right)\Big]\|_{L^2(\Omega)}\le C\sqrt{\varepsilon}\|H\|_{L^2(\Omega)},
\end{equation}
where $(\chi^*,\pi^*)$ denotes  the correctors associated with the adjoint matrix  $A^*$.

Let $\Psi=-\mathcal{L}_\varepsilon \phi_\varepsilon$, and 
 $$
 \Gamma=\text{div }(\varepsilon(1-\theta_\varepsilon)\chi^\varepsilon S_\varepsilon \nabla \widetilde{u}_0).
 $$
 Note that by (\ref{e4.4}), (\ref{e4.5}), (\ref{e4.6}) and (\ref{e4.7}),
\begin{equation}\label{e4.10}
\aligned
\int_\Omega H\cdot (\eta_\varepsilon-\eta_0)  &=\langle \Psi, \rho_\varepsilon-\rho_0\rangle_{H^{-1}(\Omega;\mathbb{R}^d)\times H_0^1(\Omega;\mathbb{R}^d)}-\int_\Omega \Gamma(\sigma_\varepsilon-\sigma_0)\\
&=J_1+J_2.
\endaligned
\end{equation}
For the first term of the RHS of (\ref{e4.10}), because $\Psi\in H^{-1}(\Omega;\mathbb{R}^d)$ is  supported in $(\partial\Omega)_{\varepsilon}$, and $1-\widetilde{\theta}_\varepsilon=0$ in $(\partial \Omega)_\varepsilon$,
we obtain 
\begin{equation*}
J_1=\langle \Psi ,\rho_\varepsilon-\rho_0-\varepsilon(1-\widetilde{\theta}_\varepsilon)\chi^{*\varepsilon} S_\varepsilon\left(\nabla\widetilde{\rho}_0\right)\rangle_{H^{-1}(\Omega;\mathbb{R}^d)\times H_0^1(\Omega;\mathbb{R}^d)}.
\end{equation*}
Therefore,
\begin{equation}\label{e4.11}
\begin{aligned}
|J_1|&\le \|\Psi\|_{H^{-1}(\Omega)}\|\rho_\varepsilon-\rho_0-\varepsilon(1-\widetilde{\theta}_\varepsilon)\chi^{*\varepsilon} S_\varepsilon\left(\nabla\widetilde{\rho}_0\right)\|_{H^1(\Omega)}\\
&\le C\|\varepsilon \theta_\varepsilon\chi^\varepsilon 
S_\varepsilon\nabla \widetilde{u}_0\|_{H^1(\Omega)}\, \sqrt{\varepsilon}\|H\|_{L^2(\Omega)}\\
&\le C {\varepsilon}\|u_0\|_{H^2(\Omega)}\, \|H\|_{L^2(\Omega)}
\end{aligned}
\end{equation}
where the second inequality follows from (\ref{e4.8}), and the last inequality follows from the analog of (\ref{plemma4.1}) (with $\widetilde{\theta}_\varepsilon$ replaced by $\theta_\varepsilon$). For the second term of the RHS of (\ref{e4.10}), we recall that $\text{div }(\chi)=0$. Hence,
\begin{equation*}
\Gamma=-\varepsilon\frac{\partial \theta_\varepsilon}{\partial x_\alpha}
\chi_j^{\alpha\beta}(x/\varepsilon)S_\varepsilon\frac{\partial \widetilde{u}_0^\beta}{\partial x_j}+\varepsilon(1-\theta_\varepsilon)\chi_j^{\alpha\beta}(x/\varepsilon)S_\varepsilon\frac{\partial^2 \widetilde{u}_0^\beta}{\partial x_\alpha \partial x_j}=\Gamma_1+\Gamma_2.
\end{equation*}
Since $\int_\Omega \Gamma =0$,  for any constant $E$,
$$
J_2=-\int_\Omega \Gamma(\sigma_\varepsilon-\sigma_0+E)=-\int_\Omega [\Gamma_1+\Gamma_2](\sigma_\varepsilon-\sigma_0+E).
$$
We split $J_2$ as two integrals, for the first integral, again since $1-\widetilde{\theta}_\varepsilon=0$ in $(\partial\Omega)_\varepsilon$ and $\Gamma_1$ is supported in $(\partial\Omega)_\varepsilon$, just as we did for $J_1$, 
$$
-\int_\Omega \Gamma_1(\sigma_\varepsilon-\sigma_0+E)=-\int_{\Omega} \Gamma_1\Big(\sigma_\varepsilon-\sigma_0-(1-\widetilde{\theta}_\varepsilon)\pi^{*\varepsilon}S_\varepsilon \left(\nabla\tilde{\rho}_0\right)+E\Big).
$$
Now, if we choose the constant $E$ as $E=\average_\Omega \pi^{*\varepsilon}S_\varepsilon \left(\nabla\tilde{\rho}_0\right)$, then
\begin{equation}\label{e4.12}
\begin{aligned}
&\bigg|\int_\Omega \Gamma_1 (\sigma_\varepsilon-\sigma_0+E)\bigg|\\
&=\bigg|\int_{\Omega} \Gamma_1 \bigg\{\sigma_\varepsilon-\sigma_0-\Big[(1-\widetilde{\theta}_\varepsilon)\pi^{*\varepsilon} S_\varepsilon \left(\nabla\widetilde{\rho}_0\right)-\average_\Omega 
\pi^{*\varepsilon} S_\varepsilon \left(\nabla\widetilde{\rho}_0\right)\Big]\bigg\}\bigg|\\
&\le C\|\Gamma_1\|_{L^2((\partial\Omega)_\varepsilon)}\, \sqrt{\varepsilon}\|H\|_{L^2(\Omega)}\\
&\le C\big(\sqrt{\varepsilon}\|\chi\|_{L^2(Y)}\|\nabla \widetilde{u}_0\|_{H^1(\mathbb{R}^d)}\big)
\big(\sqrt{\varepsilon}\|H\|_{L^2(\Omega)}\big)\\
&\le C\varepsilon\|u_0\|_{H^2(\Omega)}\|H\|_{L^2(\Omega)},
\end{aligned}
\end{equation}
where we have used (\ref{e4.9}) and Lemma \ref{lemma4.0}.
For the second integral in $J_2$, we have
\begin{equation}\label{e4.13}
\begin{aligned}
& \bigg|\int_\Omega \Gamma_2 \big(\sigma_\varepsilon-\sigma_0
+E\big)\bigg| \\
&\le \|\Gamma_2\|_{L^2(\Omega)}\|\sigma_\varepsilon-\sigma_0+\average_\Omega \pi^{*\varepsilon}S_\varepsilon \left(\nabla\widetilde{\rho}_0\right)\|_{L^2(\Omega)}\\
&\le C\varepsilon\|u_0\|_{H^2(\Omega)}\|H\|_{L^2(\Omega)},
\end{aligned}
\end{equation}
where for the last inequality we have used  
$$
\begin{aligned}
\|\sigma_\varepsilon-\sigma_0+\average_\Omega \pi^{*\varepsilon}S_\varepsilon \left(\nabla\widetilde{\rho}_0\right)\|_{L^2(\Omega)}
&\le \|\sigma_\varepsilon\|_{L^2(\Omega)}+\|\sigma_0\|_{L^2(\Omega)}
+\|\average_\Omega \pi^{*\varepsilon} S_\varepsilon \left(\nabla\widetilde{\rho}_0\right)\|_{L^2(\Omega)}\\
&\le C\|H\|_{L^2(\Omega)}.
\end{aligned}
$$
Therefore, by combining (\ref{e4.11})-(\ref{e4.13}), we have proved
\begin{equation}\label{e4.14}
\left|\int_\Omega H(\eta_\varepsilon-\eta_0)\right|\le C\varepsilon\|u_0\|_{H^2(\Omega)}\|H\|_{L^2(\Omega)} \quad\text{ for any }H\in L^2(\Omega;\mathbb{R}^d).
\end{equation}
By duality this implies that
\begin{equation}\label{e4.15}
\|\eta_\varepsilon-\eta_0\|_{L^2(\Omega)}\le C\varepsilon\|u_0\|_{H^2(\Omega)}.
\end{equation}

Finally, the problem has been reduced to the estimate of $\|\eta_0\|_{L^2(\Omega)}$.
 This will be done by another duality argument. Let $(\rho_0, \sigma_0)$
 be defined by (\ref{e4.7}). Then
\begin{equation}\label{e4.16}
\begin{aligned}
&\left|\int_\Omega H\cdot \eta_0\right|
=\left| \langle \Psi, \rho_0\rangle_{H^{-1}(\Omega; \mathbb{R}^d)\times H^1_0(\Omega; \mathbb{R}^d)}
-\int_\Omega \Gamma\sigma_0\right|\\
&\le  | \langle \Psi, \rho_0\rangle_{H^{-1}(\Omega; \mathbb{R}^d)\times H^1_0(\Omega; \mathbb{R}^d)}|
+\left|\int_ {\Omega_\varepsilon} \Gamma_1\sigma_0\right|+\left|\int_\Omega \Gamma_2\sigma_0\right|\\
&=K_1+K_2+K_3,
\end{aligned}
\end{equation}
where $\Psi, \Gamma, \Gamma_1$ and $\Gamma_2$ are as denoted above. 
Notice that again by Lemma \ref{lemma4.0} and the analog of (\ref{plemma4.1}) (with $\widetilde{\theta}_\varepsilon$ replaced by $\theta_\varepsilon$), we have
\begin{equation}\label{e4.17}
\begin{aligned}
K_1&\le \|\Psi\|_{H^{-1}(\Omega)}\|\rho_0\|_{H^1(\Omega_\varepsilon)}\\
&\le C\|\varepsilon\theta_\varepsilon \chi^\varepsilon S_\varepsilon \nabla\widetilde{u}_0\|_{H^1(\Omega)}\sqrt{\varepsilon}\| \rho_0\|_{H^2(\Omega)}\\
& \le C(\sqrt{\varepsilon}\| u_0\|_{H^2(\Omega)})(\sqrt{\varepsilon}\| \rho_0\|_{H^2(\Omega)}) \\
&\le C\varepsilon\|u_0\|_{H^2(\Omega)}\|H\|_{L^2(\Omega)}.
\end{aligned}
\end{equation}
Similarly, again by Lemma \ref{lemma4.0},
\begin{equation}\label{e4.18}
\begin{aligned}
K_2 &\le \|\Gamma_1\|_{L^2((\partial\Omega)_\varepsilon)}\|\sigma_0\|_{L^2(\Omega_\varepsilon)}\\
&\le C(\sqrt{\varepsilon}\|\chi\|_{L^2(Y)}\| \widetilde{u}_0\|_{H^2(\mathbb{R}^d)})
(\sqrt{\varepsilon}\|\sigma_0\|_{H^1(\Omega))})\\
&\le C\varepsilon\|u_0\|_{H^2(\Omega)}\|H\|_{L^2(\Omega)},
\end{aligned}
\end{equation}
and
\begin{equation}\label{e4.19}
K_3\le \|\Gamma_2\|_{L^2(\Omega)}\|\sigma_0\|_{L^2(\Omega)}
\le C\varepsilon\|u_0\|_{H^2(\Omega)}\|H\|_{L^2(\Omega)}.
\end{equation}
By combining (\ref{e4.17})-(\ref{e4.19}), we obtain 
$$
\left|\int_\Omega H\cdot \eta_0\right|\le C\varepsilon\|u_0\|_{H^2(\Omega)}\|H\|_{L^2(\Omega)},
$$
which, by duality, leads to
\begin{equation}\label{e4.20}
\|\eta_0\|_{L^2(\Omega)} \le C\varepsilon\|u_0\|_{H^2(\Omega)}.
\end{equation}
Hence we have proved that
\begin{equation}
\|w_\varepsilon\|_{L^2(\Omega)}\le \|\eta_\varepsilon-\eta_0\|_{L^2(\Omega)}+\|\eta_0\|_{L^2(\Omega)}+\|\phi_\varepsilon\|_{L^2(\Omega)}
\le C\varepsilon\|u_0\|_{H^2(\Omega)}.
\end{equation}
The proof is finished.
\end{proof}

\bibliographystyle{plain}

\bibliography{lib}

\medskip

\begin{flushleft}
Shu Gu,
Department of Mathematics,
University of Kentucky,
Lexington, Kentucky 40506,
USA.

E-mail: gushu0329@uky.edu
\end{flushleft}

\medskip

\noindent \today

\end{document}